\documentclass[12pt,reqno]{amsart}
\usepackage[margin=2.0cm]{geometry}

\usepackage{amsmath,amssymb,latexsym,amsthm,amstext}
\usepackage{bbm}
\usepackage[nobysame,alphabetic,initials]{amsrefs}
\usepackage{enumerate}

\usepackage{graphicx}
\usepackage{todonotes}

\usepackage{pgfplots}
\pgfplotsset{compat=1.15}
\usepackage{mathrsfs}
\usetikzlibrary{arrows}
\usetikzlibrary{intersections,angles,quotes}
\usepackage{float}

\usepackage{hyperref}

\usepackage{orcidlink}
\usepackage{marginnote} % for notes, delete when final
\usepackage{soul} % for highlighting  texts
\usepackage{cancel} % for crossing out in math mode

\usepackage{mathtools}
\mathtoolsset{showonlyrefs}

\newtheorem{theorem}{Theorem}[section]

\newtheorem{lemma}[theorem]{Lemma}

\newtheorem{corollary}[theorem]{Corollary}

\theoremstyle{remark}
\newtheorem{case}{Case}

\newcommand{\R}{{\mathbb R}}

\newcommand{\hyp}{{\mathbb H}}
\newcommand{\sph}{{\mathbb S}}
\newcommand{\M}{\mathcal{M}}

\DeclareMathOperator{\arcosh}{\mathrm{arcosh}}

\DeclareMathOperator{\area}{\mathrm{area}}

\makeatletter
\DeclareFontFamily{U}{tipa}{}
\DeclareFontShape{U}{tipa}{m}{n}{<->tipa10}{}
\newcommand{\arc@char}{{\usefont{U}{tipa}{m}{n}\symbol{62}}}%

\newcommand{\arc}[1]{\mathpalette\arc@arc{#1}}

\newcommand{\arc@arc}[2]{%
  \sbox0{$\m@th#1#2$}%
  \vbox{
    \hbox{\resizebox{\wd0}{\height}{\arc@char}}
    \nointerlineskip
    \box0
  }%
}
\makeatother

\title[Ball convex P\'al inequality]{P\'al's isominwidth inequality for ball convex bodies in planes of constant curvature}

\author[F. Fodor]{Ferenc Fodor$^{\orcidlink{0000-0001-9747-1981}}$}

\address{Bolyai Institute, University of Szeged, Aradi v\'ertan\'uk tere 1, 6720 Szeged, Hungary}

\email{fodorf@math.u-szeged.hu}

\author[N. Robock]{Nathan Robock}

\address{Department of Mathematics, University of Calgary, 2500 University Drive NW, Calgary, AB T2N 1N4, Canada}

\email{nathan.robock1@ucalgary.ca}

\author[\'A. Sagmeister]{Ádám Sagmeister$^{\orcidlink{0000-0002-6863-6481}}$}

\address{Bolyai Institute, University of Szeged, Aradi v\'ertan\'uk tere 1, 6720 Szeged, Hungary}

\email{sagmeister@server.math.u-szeged.hu, sagmeister.adam@gmail.com}

\subjclass{52A40, 51M09, 51M10, 52A55, 52A10}
\keywords{convex geometry, spherical geometry, hyperbolic geometry, minimal width, thickness, isominwidth inequality, ball bodies}

\dedicatory{Dedicated to Professor K\'aroly Bezdek on the occasion of his 70th birthday.}

\date{\today}

\begin{document}

\begin{abstract}
P\'al's classical isominwidth inequality states that the regular triangle has minimal area among plane convex bodies of minimal width $w$. A similar result is the Blaschke--Lebesgue inequality that states that Reuleaux triangles minimize the area among bodies of constant width $w$ in the plane. In this paper, we connect these two problems by solving the isominwidth problem for $r$-ball convex bodies in the Euclidean, hyperbolic and spherical planes.
\end{abstract}

\maketitle

\section{Introduction}\label{sec:intro}

Many of the classical problems of convex geometry are connected to the concept of width and special classes of convex bodies related to width, i.e. bodies of constant width and reduced convex bodies, are widely studied due to their applicability in the theory of convexity. One of these classical results is due to Blaschke \cite{Bla15} and Lebesgue \cite{Leb14}, who proved that among convex bodies of constant width $w>0$, the Reuleaux triangle has minimal area. A similar result was later obtained by P\'al \cite{Pal}, who proved that among convex bodies in the Euclidean plane the area is minimized by the regular triangle if the minimal width is fixed; this is the so-called isominwidth inequality. In both cases, the key tool of the proof is a lemma due to Blaschke stating that these extremal shapes also minimize the inradius with the corresponding width constraint. Although higher dimensional extensions of these problems are still unsolved even in $\R^3$, there are several generalizations in other directions. It is known that the solution of the Blaschke--Lebesgue problem is still the Reuleaux triangle both on the sphere (K. Bezdek \cite{Bez21}) provided that the minimal width is less than $\frac{\pi}{2}$, and in the hyperbolic plane (B\"or\"oczky and Sagmeister \cite{BoS22}). P\'al's problem was also extended to the sphere for minimal width at most $\frac{\pi}{2}$ by K. Bezdek and Blekherman \cite{Bez00}. If the minimal width is greater than $\frac{\pi}{2}$, then the minimizer for both problems is the polar of a Reuleaux triangle (see Freyer and Sagmeister \cite{FS}). Surprisingly, P\'al's problem does not have a minimal solution in any dimension $n\geq 2$ in the hyperbolic space $\hyp^n$, but for h-convex (horocyclically convex) bodies, the so-called horocyclic Reuleaux triangle minimizes the area in $\hyp^2$ for fixed width (see B\"or\"oczky, Freyer and Sagmeister \cite{BoFS}). 

A convex body is called $r$-ball convex if it is the intersection of balls of radius $r$. Note that one can connect all of these problems by considering $r$-ball convex bodies where $r\geq w$ (we also assume $r<\frac{\pi}{2}$ in $\sph^2$). Indeed, for $r=w$, we obtain the Blaschke--Lebesgue inequality; in $\R^2$, P\'al's problem is the limit case as $r\to\infty$; in $\sph^2$, the spherical version of the P\'al-inequality is the limit case $r\to\frac{\pi}{2}$; and in $\hyp^2$, the horocyclic P\'al inequality is obtained by the limit case $r\to\infty$. Our main result is the $r$-ball convex variant of the isominwidth inequality that has been earlier verified by M. Bezdek \cite{Bez09} for disk polygons in $\R^2$. For fixed $w\leq r$, let $T_{w,r}$ denote the convex set that one obtains by connecting the vertices of a suitable regular triangle by radius $r$ circular arcs such that the minimal width of the resulting figure is $w$. We call $T_{w,r}$ a regular $r$-disk triangle, and it is clearly $r$-ball convex.
\begin{theorem}\label{thm:isominwidth:spindleconvex}
Let $K$ be an $r$-ball convex body in $\M^2$ where $\M^2$ stands for one of $\R^2$, $\hyp^2$ and $\sph^2$. Let $w$ denote the minimal width of $K$, and let $w\leq r$ (in the case $\M^2=\sph^2$ we also assume $r<\frac{\pi}{2}$). Then
$$
\area\left(K\right)\geq\area\left(T_{w,r}\right),
$$
%where $T_{w,r}$ is  a regular $r$-disk triangle with the same minimal width as $K$. 
with equality if and only if $K$ is congruent to $T_{w,r}$.
\end{theorem}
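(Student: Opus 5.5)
We follow the classical route of Pál and Blaschke: first a Blaschke-type inradius lemma, then an area comparison based on inscribed disks.

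\textbf{Step 1: inradius lemma.} We first show that if $K$ is $r$-ball convex with minimal width $w\le r$, then its inradius satisfies $\varrho(K)\ge \varrho(T_{w,r})$. Equality should hold only for $K$ congruent to $T_{w,r}$.

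Let $B$ be an incircle of $K$ with center $c$ and radius $\varrho$. By the usual argument, the touching points of $B$ with $\partial K$ cannot all lie in an open half-circle of $\partial B$. Otherwise $B$ could be moved slightly into the interior of $K$ and then enlarged.

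\begin{itemize}
\item If two touching points are antipodal, then $K$ lies in the strip (or the region bounded by equidistant curves or great circles) between the supporting lines at these points. Hence $w\le 2\varrho$, which is far more than needed.
\item Otherwise there are three touching points. At each of them, the supporting $r$-circle of $K$ is tangent to $B$, because $r$-ball convexity gives supporting $r$-disks at every boundary point. These three $r$-disks contain $K$, so $K$ is contained in an $r$-disk triangle $D$ circumscribed about $B$.
\end{itemize}

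It then suffices to show that among $r$-disk triangles circumscribed about a disk of radius $\varrho$, the width is maximized by the regular one. Then $w(K)\le w(D)\le w(T')$, where $T'$ is the regular $r$-disk triangle with inradius $\varrho$. Since the width of regular $r$-disk triangles is monotone in the inradius, this gives $\varrho\ge\varrho(T_{w,r})$.

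For the width maximization we parametrize $D$ by the angular positions of its three tangency points. The width of such a triangle is attained as a vertex-to-opposite-arc distance. So we must show that the minimum of the three vertex-to-opposite-side distances is largest when the tangency points are equally spaced. We would show that each of these distances is monotone in the relevant angle. The computation is done uniformly in $\M^2$ using the law of cosines in curvature $\kappa\in\{0,-1,1\}$. The assumption $r<\pi/2$ on $\sph^2$ keeps all disks proper and convex.

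\textbf{Step 2: area bound.} Let $B$ be an incircle of $K$ of radius $\varrho\ge\varrho_0:=\varrho(T_{w,r})$. For each direction, $K$ has width at least $w$. Following Pál's argument, we build a domain $K_0\subseteq K$ as follows:
\begin{itemize}
\item take the convex hull in the $r$-ball convex sense, i.e. the $r$-ball convex hull, of $B_0$, a concentric disk of radius $\varrho_0$, together with suitable points of $K$ at distance forced by the width condition;
\item then compare areas sector by sector.
\end{itemize}

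The plan is to decompose around the center $c$. We use three directions, or the touching configuration, to locate three points $p_1,p_2,p_3\in K$ whose $r$-spindle hull together with $B_0$ contains a region congruent to pieces of $T_{w,r}$. A cleaner alternative, used by Pál in the Euclidean case, is a Steiner-type or rotation symmetrization argument. Here, however, we expect to proceed instead via the concavity/Blaschke selection approach. By Blaschke selection in the closed family of $r$-ball convex bodies with minimal width at least $w$, an area minimizer exists. We then show that a minimizer must be an $r$-disk polygon: replacing arcs by $r$-arcs through extreme points keeps $r$-ball convexity and the width, and decreases area. After that we reduce the number of sides to three by a local variation that moves a vertex along an $r$-circle, using the strict "concavity" of area along such a motion. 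M. Bezdek's result for disk polygons in $\R^2$ indicates this is feasible. Finally, among $r$-disk triangles with width $w$, the inradius lemma and a direct computation show that the regular one has least area.

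\textbf{Main obstacle.} We expect the hardest part to be the reduction from general minimizers to $r$-disk triangles in all three geometries. One must check that the vertex-moving variation preserves minimal width at least $w$ and strictly decreases area, which needs explicit area formulas for $r$-disk polygons in curvature $\kappa$. The inradius inequality of Step 1 is what controls the width constraint during these variations. For the equality case, we track the strict inequalities in Step 1 and in the variation, which force $K$ to be congruent to $T_{w,r}$.
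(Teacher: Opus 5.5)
Your Step~2, which is where the area inequality has to be proved, is a research program rather than an argument, and its key steps are either unproved or false as stated. Existence of a minimizer by Blaschke selection is fine, since up to isometries all competitors lie in a fixed $r$-disk. The problem starts with the claim that a minimizer is an $r$-disk polygon because ``replacing arcs by $r$-arcs through extreme points keeps $r$-ball convexity and the width.'' Replacing a boundary arc of $K$ by the $r$-arc through its endpoints replaces $K$ by a proper $r$-ball convex subset $K\cap B'$. Minimal width is only monotone under inclusion, so it can drop below $w$. For instance, every such shaving of the disk of radius $w/2$ (which is $r$-ball convex with minimal width $w$) strictly lowers the width. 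To use this you would have to show that a minimizer has a boundary arc not involved in any width-realizing pair, and that is the real difficulty. The reduction to three sides by ``moving a vertex along an $r$-circle'' is not specified: you do not say which motion keeps $w(\cdot)\ge w$, or why area is strictly concave along it in all three geometries. You flag this yourself as the main obstacle, but it is the whole problem. Finally, the inradius bound says nothing directly about area, even among $r$-disk triangles. So the closing ``direct computation'' over a three-parameter family is missing, and so is the equality case.

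The idea you sketch first and then abandon is the one that works, provided you use the incircle $B=B(p,\varrho)$ itself rather than a disk of radius $\varrho_0$; this is the paper's route. By Theorem~\ref{thm:Blaschke}, $\varrho\ge\varrho_0$. If $\varrho<w/2$, the incircle has three touching points with $p$ inside their triangle. The width condition at the supporting lines through them gives points of $K$ at distance $w-\varrho$ from $p$ whose $r$-caps over $B$ are pairwise non-overlapping (Lemma~\ref{lem:nonoverlapping}). These caps are congruent and disjoint, so rotating them to angular separation $2\pi/3$ preserves area. The resulting cap-domain contains the $r$-disk hexagon $Q_{w,r,\varrho}$ spanned by the rotated apexes and the points of $\partial B$ opposite them. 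The heart of the proof is Lemma~\ref{lem:areaofhexagonsincreases}: $\area(Q_{w,r,\varrho})>\area(T_{w,r})$ for $\varrho\in(\varrho_0,w/2)$. This is not a containment. Place $T_{w,r}$ with incenter $p$ and vertices on the rays through the apexes. The apexes (at distance $w-\varrho<w-\varrho_0$) lie strictly inside $T_{w,r}$, while the opposite points of $\partial B$ lie outside it. So it is an area trade-off between the small curvilinear triangle lost near each vertex and the one gained near each arc midpoint. The paper settles it with the monotonicity of the distance between two intersecting $r$-circles along an arc (Lemma~\ref{sublem:minimaldistancebetweencircles}) and the reflection symmetry of two congruent intersecting circles. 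The case $\varrho=w/2$ is handled separately by $\area(K)\ge\area(B)>\area(T_{w,w})\ge\area(T_{w,r})$, using Blaschke--Lebesgue in $\M^2$ and the monotonicity of $\area(T_{w,r})$ in $r$ (Lemma~\ref{lem:areaoftrianglesdecreasesinr}). No minimizer, polygonal reduction or optimization over triangles is needed. Your Step~1 is a reasonable alternative route to the inradius bound. However, what it needs is an upper bound on $w(D)$, for example the width with respect to the tangent line at the touching point opposite the smallest of the three central angles (which is at most $2\pi/3$). The unproved identification of $w(D)$ with a vertex-to-opposite-arc distance is not what is required.
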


We also prove that, under the same hypotheses as in Theorem~\ref{thm:isominwidth:spindleconvex}, the regular  $r$-disk triangle $T_{w,r}$ minimizes the inradius as well. This fact, which is used in the proof of Theorem~\ref{thm:isominwidth:spindleconvex} is an $r$-ball convex analog of Blaschke's classical result in constant curvature planes, and thus it is sufficiently interesting on its own to be stated explicitly.

\begin{theorem}\label{thm:Blaschke}
Let $K\subset\M^2$ be an $r$-ball convex body of minimal width $w$ where $0<w\leq r$ and if $\M^2=\sph^2$, we also assume $r<\frac{\pi}{2}$. %Let $T_{w,r}\subset\M^2$ denote an $r$-ball convex regular triangle of minimal width $w$. 
Then
\[
\varrho\left(K\right)\geq\varrho\left(T_{w,r}\right),
\]
with equality if and only if $K$ is congruent to $T_{w,r}$.
\end{theorem}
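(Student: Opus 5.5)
The plan follows Blaschke's classical argument for the inradius, adapted to $r$-ball convexity. Let $\varrho=\varrho(K)$ and let $B$ be an incircle of $K$ with centre $o$. Standard in constant curvature planes: the incircle touches $\partial K$ at points whose supporting lines (geodesics) cannot all lie on one side of a line through $o$. Otherwise $B$ could be moved slightly and enlarged inside $K$. Hence there are either two touching points $p_1,p_2$ that are antipodal on $\partial B$, or three touching points $p_1,p_2,p_3$ forming a triangle containing $o$ in its interior. In the first case the two supporting lines are parallel tangents (in $\hyp^2$ and $\sph^2$: both perpendicular to the diameter $p_1p_2$). The width of $K$ in that direction is then $2\varrho$, so $w\le 2\varrho$. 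It remains to check $2\varrho(T_{w,r})$ strictly exceeds what $T_{w,r}$ gives, i.e.\ $\varrho(T_{w,r})<w/2$. This holds by a direct computation for the disk triangle. So only the second case matters.

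In the second case I use $r$-ball convexity. At each $p_i$ the circle $C_i$ of radius $r$ tangent to $B$ at $p_i$ and containing $B$ contains $K$. Indeed, $K$ is an intersection of radius-$r$ disks, and any such disk containing $K$ and passing through $p_i$ must be tangent there to the supporting line, hence must be $C_i$. Hence $K\subset D:=C_1\cap C_2\cap C_3$, an $r$-disk triangle (possibly with some arcs degenerate) circumscribed about $B$. Minimal width is monotone under inclusion, so $w\le w(D)$. The statement then reduces to: among $r$-disk triangles circumscribed about a fixed circle of radius $\varrho$, the minimal width is maximized exactly by the regular one. Equivalently, if $w(D)\ge w$ then $\varrho\ge\varrho(T_{w,r})$, because for the regular $r$-disk triangle the inradius is a strictly increasing function of its width.

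To prove the reduced extremal statement, parametrize $D$ by the three central angles $\alpha_1,\alpha_2,\alpha_3$ between consecutive touching points as seen from $o$, with $\sum\alpha_i=2\pi$ and each $\alpha_i<\pi$. The width of an $r$-disk polygon with $w\le r$ is attained between a vertex and the opposite arc, i.e.\ it equals the distance from a vertex $v$ to the centre of the opposite circle, subtracted from $r$. Here $r\ge w$ guarantees that the width of such a body is realized by vertex–arc pairs. The distance from the vertex $v_k$ (between arcs $i,j$) to the centre $c_l$ of the opposite arc is computed by the law of cosines in $\M^2$, in the triangle $o\,c_l\,v_k$. The distance $oc_l=r-\varrho$ is fixed, and $ov_k$ is a decreasing function of the angle $\alpha_k$ subtended at the vertex region. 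The angle at $o$ is determined by the $\alpha$'s. So $w(D)=\min_k f(\alpha_1,\alpha_2,\alpha_3;k)$. One then shows that the smallest of the three vertex widths is at most its value at $\alpha_i=2\pi/3$. The vertex whose opposite angle configuration is "worst", namely the largest $\alpha_k$, gives a width no larger than in the regular case, with equality only when all $\alpha_i$ are equal. This is a monotonicity check in one variable after fixing which $\alpha$ is maximal.

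The main obstacle is this last monotonicity/comparison computation, uniformly in the three geometries. I would try to write everything via the unified trigonometric functions ($\sin_\kappa,\cos_\kappa$) and to show the relevant distance is monotone in the largest angle by differentiating the law of cosines. I would also need the fact $w\le r$ (and $r<\pi/2$ on $\sph^2$) to guarantee that the vertex–arc distance, rather than an arc–arc distance, realizes the width. For equality, $K\subset D$ with $w(D)=w(K)$, $D$ regular, and $K$ containing the three vertices forces $K=D=T_{w,r}$. The vertices are forced because otherwise the width in the vertex direction would drop below $w$.
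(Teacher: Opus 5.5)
Your reduction is sound and parallels the paper. With three contact points surrounding $o$, you get $K\subset D=C_1\cap C_2\cap C_3$, so it suffices to show $w(D)\le w'$, where $w'$ is the minimal width of the regular $r$-disk triangle with inradius $\varrho$. The gap is in how you evaluate $w(D)$, which is the whole content of the theorem. Two of the claims you rely on are false.

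\emph{The monotonicity is reversed.} With $\alpha_k$ the central angle of the sector containing $v_k$, the distance $d(o,v_k)$ is increasing in $\alpha_k$, not decreasing. The two radius-$r$ circles tangent to $B$ at far-apart contact points meet far from $o$. In $\R^2$, $d(o,v_k)=\sqrt{r^2-(r-\varrho)^2\sin^2(\alpha_k/2)}-(r-\varrho)\cos(\alpha_k/2)$, which equals $\varrho$ at $\alpha_k=0$ and increases. So the vertex with the largest $\alpha_k$ is the farthest from $o$, and it is the wrong vertex to use.

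\emph{The width formula fails for non-regular $D$.} The assumption $w\le r$ does not force the minimal width of a non-regular circumscribed $r$-disk triangle to be $\min_k\bigl(r-d(v_k,c_k)\bigr)$. The point where the ray from $c_k$ through $v_k$ meets $\partial B(c_k,r)$ need not lie on the arc of $D$. In that case $r-d(v_k,c_k)$ is not a width of $D$ at all.

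A concrete check: take $\R^2$, $r=1$, $\varrho=0.3$ and central angles $150^\circ,120^\circ,90^\circ$. Then $w'\approx 0.745$ and $w(D)\approx 0.648$. For the $150^\circ$ vertex, $r-d(v,c)\approx 0.782>w'$, so your chosen vertex does not give a width below the regular one. For the $120^\circ$ vertex, $r-d(v,c)\approx 0.615<w(D)$, so this quantity is not a width of $D$. The one-variable check you postpone would therefore be verifying a false statement.

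What works is to drop vertex--arc formulas and use the width with respect to the supporting line $H_k$ at the contact point $p_k$ opposite $v_k$. Take a point $y_k\in K$ on a width-realizing supporting line. It satisfies $d(o,y_k)\ge w-\varrho$. It also lies in the cap of $D$ at $v_k$, i.e.\ the part of $D\setminus B$ between $p_i$ and $p_j$, on which $d(o,\cdot)$ is maximized exactly at $v_k$. Hence $w-\varrho\le d(o,v_k)$ for every $k$. Choose $k$ with the \emph{smallest} $\alpha_k$, so $\alpha_k\le 2\pi/3$. The correct monotonicity then gives $w-\varrho\le w'-\varrho$, i.e.\ $w\le w'$. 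Equality forces all $\alpha_k=2\pi/3$ and $y_k=v_k\in K$, so $K=D$.

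This is essentially the paper's argument. Lemma~\ref{lem:nonoverlapping} places points $q_k\in[p,y_k]$ at distance $w-\varrho$ from $p$ in the three caps of $D$. The proof of Theorem~\ref{thm:Blaschke} then compares the three non-overlapping congruent $r$-caps with these apexes, each covering at most a third of $\partial B$, with the caps of the regular $r$-disk triangle whose incircle is $B$.
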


We note that our arguments work in the limit cases $r\to\infty$ if $\M^2=\R^2$ and $\M^2=\hyp^2$ and in the limit case $r\to\frac{\pi}{2}$ if $\M^2=\sph^2$, hence we also provide a simpler proof of the previously known (horoconvex) P\'al inequality in $\M^2$.

The paper is organized as follows. In Section~\ref{sec:preliminaries}, we collect the necessary definitions and technical statements for the arguments. Section~\ref{sec:inradius} contains the proof of Theorem~\ref{thm:Blaschke}. Finally, Theorem~\ref{thm:isominwidth:spindleconvex} is proved in Section~\ref{sec:area}.

\section{Preliminaries}\label{sec:preliminaries}

We use the notation $\M^n$ for an $n$-dimensional space of constant curvature equipped with the geodesic metric $d$. For a positive number $r$ and a point $x\in\M^n$, the closed ball of radius $r$, centered at $x$ is $B(x,r)=\{y\in\M^n\colon d(x,y)\leq r\}$. For two points $x,y\in\M^n$ (where we assume that $x$ and $y$ are not antipodal if $\M^n=\sph^n$), the unique geodesic segment with endpoints $x$ and $y$ is denoted by $[x,y]$. The set $K\subset\M^n$ is \emph{convex} if for all $x,y\in K$, $[x,y]\subset K$ (in the spherical case, we also assume that $K$ is contained in an open hemisphere). $K\subset\M^n$ is a \emph{convex body} if it is convex, compact, and has a non-empty interior. The intersection of an arbitrary family of convex sets is still convex in $\M^n$, so one can define the \emph{convex hull} of the set $X\subset\M^n$ (it is also assumed that $X$ is contained in an open hemisphere if $\M^n=\sph^n$) as the intersection of all convex sets in $\M^n$ containing $X$.  An \emph{$r$-arc} with endpoints $a$ and $b$, denoted $\arc{ab}$, is one of the shorter arcs of radius $r$ from point $a$ to $b$ with the length of $\arc{ab}$ denoted by $\ell(\arc{ab})$.

We use the following technical statement in the proof of Theorem~\ref{thm:isominwidth:spindleconvex}.

\begin{lemma}\label{sublem:minimaldistancebetweencircles}
Let $B(c_1,r)$ and $B(c_2,r)$ be distinct disks such that $d(c_1,c_2) <r$, and let $L$ be the line through $c_1$ and $c_2$.  Let $v=L \cap \partial B(c_1,r)$ such that $v$ is not in $B(c_2,r)$ and choose $f \in \partial B(c_1,r) \cap \partial B(c_2,r)$ to be one of the two points in the intersection. Additionally, let $x \in \arc{fv}$ such that $x$ moves along the arc from $f$ to $v$.  Then the closest point to $x$ on $B(c_2,r)$ is $y=[c_2,x] \cap B(c_2,r)$, and $d(x,y)$ is a strictly increasing function of the length of $\arc{fx}$.
\end{lemma}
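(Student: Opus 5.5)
The argument splits into two parts: identifying the closest point, then proving monotonicity of the distance.

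\emph{Closest point.} Every $x\in\arc{fv}$ other than $f$ lies outside $B(c_2,r)$. Indeed, $\arc{fv}$ is the part of $\partial B(c_1,r)$ on the far side of the perpendicular bisector of $[c_1,c_2]$ from $c_2$, so $d(x,c_2)>d(x,c_1)=r$ there. For a point $x$ outside a ball of radius $r$ in $\M^2$, the nearest point of the ball is where the geodesic segment $[c_2,x]$ meets $\partial B(c_2,r)$. To see this, take any $z\in B(c_2,r)$; the triangle inequality gives $d(x,z)\ge d(x,c_2)-d(c_2,z)\ge d(x,c_2)-r$. Equality holds only if $z$ lies on $[c_2,x]$ at distance $r$ from $c_2$. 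In $\sph^2$ this step uses $r<\frac{\pi}{2}$ together with $d(x,c_2)\le d(c_1,c_2)+r<\pi$, so that $[c_2,x]$ is the unique minimizing geodesic. Hence $y=[c_2,x]\cap\partial B(c_2,r)$ and
\[
d(x,y)=d(x,c_2)-r .
\]
At $x=f$ this gives $d=0$.

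\emph{Monotonicity.} It remains to show that $d(x,c_2)$ is strictly increasing as $x$ moves from $f$ to $v$. Parametrize $x$ by the angle $\theta=\angle c_2c_1x\in[\theta_f,\pi]$, where $\theta=\pi$ corresponds to $x=v$. The length of $\arc{fx}$ equals a positive constant multiple of $\theta-\theta_f$, so it is an increasing function of $\theta$. In the triangle $c_1c_2x$, the sides $d(c_1,c_2)=:a$ and $d(c_1,x)=r$ are fixed and the included angle is $\theta$. By the law of cosines in each geometry, $d(x,c_2)$ is a strictly increasing function of $\theta\in(0,\pi]$:
\begin{itemize}
\item in $\R^2$, $d^2=a^2+r^2-2ar\cos\theta$;
\item in $\hyp^2$, $\cosh d=\cosh a\cosh r-\sinh a\sinh r\cos\theta$;
\item in $\sph^2$, $\cos d=\cos a\cos r+\sin a\sin r\cos\theta$, and $d\in[0,\pi]$, where $\cos$ is strictly decreasing.
\end{itemize}
In each case the right-hand side is strictly monotone in $\theta$ because $a,r>0$, and in $\sph^2$ also $a,r<\pi$. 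Strictness is needed, and it holds because the disks are distinct, so $a>0$.

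\emph{Main obstacle.} The delicate point is the spherical case. There one must make sure that all relevant distances stay below $\pi$, so that geodesic segments are unique and the nearest-point formula is valid; the assumption $r<\frac{\pi}{2}$ guarantees this. One should also check that $\arc{fv}$ really lies on the side where $\theta\ge\theta_f$, which holds because $v$ is the point of the circle antipodal to the direction of $c_2$. Finally, $\arc{fv}$ lies on the circle of radius $r$ centered at $c_1$ (not a shorter arc of another circle), so $d(c_1,x)=r$ holds throughout, which is all the law-of-cosines argument needs.
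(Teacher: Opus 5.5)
Your proof is correct and follows the same route as the paper: the nearest point of $B(c_2,r)$ to $x$ lies on $[c_2,x]$, so $d(x,y)=d(x,c_2)-r$, and $d(x,c_2)$ increases because the angle $\angle c_2c_1x$ grows as $x$ moves from $f$ to $v$. You make explicit the two steps the paper takes for granted: the triangle inequality replaces the paper's orthogonality remark, and the three laws of cosines justify the implication from larger angle to larger distance.
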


\begin{proof}
Let $x_1,x_2 \in \arc{fv}$ be such that $\arc{fx_1}$ is shorter than $\arc{fx_2}$. Then $\angle fc_1x_1 < \angle fc_1x_2$, so $\angle c_2c_1x_1 = \angle c_2c_1f + \angle fc_1x_1 <  \angle c_2c_1f + \angle fc_1x_2 = \angle c_2c_1x_2$, and thus $d(c_2,x_1) < d(c_2,x_2)$. Since for $i \in \{1,2\}$, $[c_2,x_i]$ is orthogonal to $B(c_2,r)$, it is clear that the closest points on $B(c_2,r)$ to $x_1$ and $x_2$ are $y_1=[c_2,x_1] \cap B(c_2,r)$ and $y_2=[c_2,x_2] \cap B(c_2,r)$, respectively.  Then, as $d(c_2,y_1) = d(c_2,y_2) = r$, and $d(c_2,x_1) < d(c_2,x_2)$, it follows that $d(x_1,y_1) < d(x_2,y_2)$.
\end{proof}

\subsection{Width of convex bodies in \boldmath$\M^n$}

Let $K\subset\M^n$ be a convex body and let $H$ be a supporting hyperplane of $K$. We use the notation $w(K,H)$ for the width of $K$ with respect to $H$. The definition depends on $\M^n$, but in any case the width is a continuous function of the direction of $H$, and its maximum among all supporting hyperplanes coincides with the diameter of $K$. The \emph{minimal width} of $K$ (also called \emph{thickness}), which is denoted by $w(K)$, is monotonically increasing with respect to containment, i.e.  if $K_1\subseteq K_2$ then $w(K_1)\leq w(K_2)$.

In spherical and hyperbolic spaces, we use the notion of width introduced by Lassak \cites{La15,Las23}. We note that there are several different concepts of width in hyperbolic space that are all interpretations of the Euclidean width, see the papers of G. Horv\'ath \cite{Hor21} and of B\"or\"oczky, Cs\'epai and Sagmeister \cite{BoCsS}. In $\R^n$, $w(K,H)$ is the distance of $H$ and the unique supporting hyperplane to $K$ that is parallel to $H$. If $\M^n=\hyp^n$, $w(K,H)$ is the distance of $H$ and a farthest ultraparallel (or hyperparallel) supporting hyperplane of $K$. If $\M^n=\sph^n$, the width is defined by lunes. A \emph{lune} is the intersection of two hemispheres with different centers that are not antipodal. The \emph{breadth} of $L$ is the angle of the two hyperplanes bounding $L$, and the \emph{corner} of $L$ is the intersection of these two bounding hyperplanes. We say that $L$ is a \emph{supporting lune} of $K$ if $K\subset L$ and $K$ intersects both of the bounding hyperplanes of $L$ outside of its corners. On the sphere, $w(K,H)$ is the breadth of a lune of minimal breadth such that one of its bounding hyperplanes is the supporting hyperplane $H$.

\subsection{\boldmath$r$-ball convexity}
In this paper, we prove analogs of the classical P\'al inequalities in a setting where the notion of convexity is modified. Let $r>0$ be fixed and let $K\subset\M^n$ be a compact set that is contained in a closed ball of radius $r$. If $\M^n=\sph^n$, we also assume $r<\frac{\pi}{2}$. We say that $K$ is \emph{$r$-ball convex} if it is equal to the intersection of all radius $r$ closed balls in which it is contained. Notice that, unless $K$ is the empty set or a singleton, the interior of an $r$-ball convex $K$ is non-empty. Similarly to the classical convex case, we call a compact set $K$ in $\M^n$ an \emph{$r$-ball convex body} if it is $r$-ball convex and its interior is non-empty. In the concept of $r$-ball convexity, the radius $r$ closed balls play a role that is similar to those of closed half-spaces in classical (linear) convexity. An $r$-ball convex body $K$ has the following important support property: at any boundary point $x$, there exists a radius $r$ closed ball $B$ with $x\in B$ and $K\subset B$. In this case, we say that $B$ supports $K$ at $x$. 
For two points $x,y\in\M^n$ with $d(x,y)\leq 2r$, let $[x,y]_r$ denote the intersection of all radius $r$ closed balls that contain both $x$ and $y$. We call the set $[x,y]_r$ the $r$-segment determined by $x$ and $y$.  
If $K\subset\M^n$ is an $r$-ball convex body, then for any $x,y\in K$, $[x,y]_r\subset K$. This is in analogy with the usual definition of convexity using (linear) segments. We note that if $K$ has the latter property, then it is called \emph{$r$-spindle convex}. In our case, $r$-ball convexity and $r$-spindle convexity are equivalent. However, there are more general settings in which these two notions are not equivalent; see, for example, L\'angi, Nasz\'odi and Talata \cite{LNT13}. 
Although $r$-ball convexity does not include classical convexity directly, in the limit as $r\to\infty$ when $\M^n=\R^n$ or $\hyp^n$, and $r\to\pi/2$ when $\M^n=\sph^n$, it reproduces classical convexity in $\R^n$, horoconvexity in $\hyp^n$ and spherical convexity in $\sph^n$.
We note that $r$-ball convexity is known under other names as well; for more properties, a detailed history, and further references, see the recent survey by Bezdek, L\'angi and Nasz\'odi \cite{BLN25}. 

Since intersections of $r$-ball convex bodies are also $r$-ball convex, one can define the $r$-ball convex hull of a set. Let $X\in\M^n$ be a compact set that is contained in a closed ball of radius $r$. Then the $r$-ball convex hull $[X]_r$ is the intersection of all closed balls of radius $r$ that contain $X$. Clearly $[X]_r$ is an $r$-ball convex body. 

The intersection of a finite number of closed balls of radius $r$ is naturally $r$-ball convex. Such objects are called \emph{$r$-ball polyhedra} and, if $n=2$, then we call them \emph{$r$-disk polygons}. It is not difficult to see that the boundary of an $r$-disk polygon $P$ is the union of a finite number of circular arcs of radius $r$ and, unless $P$ is a disk of radius $r$, it has an equal number of arcs, called \emph{sides}, and non-smooth points, called \emph{vertices}. The boundary structure of higher dimensional ball-polyhedra are more complicated; for more information on this topic see, for example, the paper by Bezdek, L\'angi, Nasz\'odi and Papez \cite{BLNP07}. The $r$-ball convex hull of a finite number of points is called an \emph{$r$-ball polytope} and in the case when $n=2$, $r$-ball polytopes are $r$-disk polygons. However, this is not always the case as an $r$-segment for $n\geq 3$ is the intersection of infinitely many balls that cannot be reduced to a finite number. In this paper, we work in constant curvature planes so we are in the case where $n=2$ and will use $r$-disk polygons.

In the last 10 years, extensive research has been done in this and other more general geometric models with modified concepts of convexity. Intersections of congruent balls appear naturally in several problems such as the Kneser--Poulsen conjecture, Reuleaux polyhedra, and bodies of constant width, among others. Ball-polytopes play important roles in best and random approximation of convex bodies with curvature constraints. For the current state-of-the-art see the survey paper by Bezdek, L\'angi and Nasz\'odi \cite{BLN25} and the references therein.

The geometric figure of central importance in our paper is the regular $r$-disk triangle $T_{w,r}$ of minimal width $w$. The following lemma states that the minimal width of $T_{w,r}$ is attained by its ``height''.

\begin{lemma}
Let $T_{w,r} \subset \M^2$ be a regular $r$-disk triangle such that $0 < w \leq r$, then the minimal width of $T_{w,r}$ is the distance between a vertex and the midpoint of the $r$-arc connecting the other two vertices. 
\end{lemma}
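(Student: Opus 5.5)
Let $a,b,c$ be the vertices of $T=T_{w,r}$, and let $m_a$ be the midpoint of the side $\arc{bc}$; set $h=d(a,m_a)$. We must show two things: that $w(T,H)\ge h$ for every supporting line $H$, and that this value is attained.

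\emph{Attainment.} Let $H$ be the tangent line to $\arc{bc}$ at $m_a$. By the threefold symmetry, the line through $a$ and $m_a$ is the symmetry axis of $T$, and it is orthogonal to $H$.
\begin{enumerate}
\item In $\R^2$, $a$ is the farthest point of $T$ from $H$. In $\hyp^2$ the line through $a$ orthogonal to this axis is ultraparallel to $H$ and supports $T$. In $\sph^2$ the lune bounded by $H$ and the great circle through $a$ orthogonal to the axis works.
\item In each case the width in this direction equals $d(a,m_a)=h$.
\item To check that this line really supports $T$ at $a$, I will use that the two arcs meeting at $a$ make the same angle with the axis. Since $r\ge w$, the interior angle at $a$ is less than $\pi$ (it is at most that of a Reuleaux triangle), so the orthogonal line touches $T$ only at $a$.
\end{enumerate}

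\emph{Lower bound.} Parametrize supporting lines by their contact point $p$ on $\partial T$. By symmetry, it suffices to let $p$ range over the half-arc from $m_a$ to $c$ of $\arc{bc}$, together with the supporting lines at the vertex $c$.
\begin{enumerate}
\item When $H$ is tangent to $\arc{bc}$ at $p$, the opposite support point is the vertex $a$ until the direction rotates past the arc $\arc{ab}$ at $a$. Note that the normal at $p$ passes through the center $o$ of the disk carrying $\arc{bc}$, and $a$ lies on the far side of $o$ from $p$ only when $r$ is large. The cleanest formulation is therefore that the width in direction $H$ equals the distance from the farthest point of $T$ to $H$.
\item I would show this distance is minimized at $p=m_a$. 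As $p$ moves from $m_a$ toward $c$, the distance from $a$ to the tangent line at $p$ increases. This follows from Lemma~\ref{sublem:minimaldistancebetweencircles}-type monotonicity, or directly: rotating a tangent line of a circle while keeping the point $a$ (inside the disk $B(o,r)$) fixed, the distance from $a$ to the tangent equals $r$ minus the signed projection of $\overrightarrow{oa}$ onto the normal direction. This is monotone in the angle from the axis, by trigonometric laws in $\M^2$.
\item For supporting lines at the vertex $c$ (normals between the normals of the two arcs at $c$), the opposite support point lies on $\arc{ab}$. By symmetry, this case reduces to the previous one with the roles of the arc and the vertex swapped. The width is then the distance from $c$ to a tangent of $\arc{ab}$, and that distance is at least the distance from $c$ to the tangent at $m_c$, which is $h$ again.
\end{enumerate}

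The main obstacle is making the monotonicity in the second step rigorous uniformly in $\R^2$, $\hyp^2$ and $\sph^2$, given Lassak's definitions of width (ultraparallel lines, lunes). I would handle it by working with the ball $B(o,r)$ containing $T$. For a tangent line $H$ at $p$, the width $w(T,H)$ equals the distance from $H$ to the farthest point of $T$ along geodesics orthogonal to $H$. Distances from a fixed point to tangent lines of a circle are computed by the (hyperbolic/spherical) right-triangle formula $\sin d=\sin(\cdot)\cos\theta$ (and its analogues), which is monotone in the angle $\theta$. The hypothesis $w\le r$ guarantees that $a$ lies inside $B(o,r)$ and that all relevant angles stay in the monotone range.
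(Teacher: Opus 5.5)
Your attainment argument and steps 1--2 of the lower bound are essentially the paper's proof. The monotonicity of the distance from $a$ to a tangent of $B(o,r)$ rotating away from $m_a$ is the paper's ``distance from $c_1$ to the chord through $v_1$ is maximal iff the chord is orthogonal to $[c_1,v_1]$''.

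\textbf{The gap is in step 3.} Your claim that every supporting line at the vertex $c$ has its opposite contact on $\arc{ab}$ is false whenever $r>w$. In $\R^2$, let $\beta$ be the central angle of a side. The outward normals at $c$ fill an angle $\frac{2\pi}{3}-\beta$, while those along $\arc{ab}$ fill only an angle $\beta$, and $\beta<\frac{\pi}{3}$ exactly when $r>w$. Concretely, rotate the tangent of $\arc{bc}$ at $c$ slightly about $c$: the parallel supporting line on the other side still touches $T$ only at $a$. Such vertex--vertex directions are tangent to no arc, so steps 1--2 do not cover them, and your role swap is unavailable. The paper only asserts that a pair realizing the \emph{minimal} width is of vertex--arc type. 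In $\R^2$ this holds because on a vertex--vertex range the width is $\langle a-c,u\rangle$, a sinusoid in the angle of $u$. It is concave there, so it is minimized at an endpoint of the range, where one contact lies on an arc.

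\textbf{The role swap also fails outside $\R^2$.} Even in the vertex--arc case, reading off $w(T,H)$ as the distance from $c$ to a tangent of $\arc{ab}$ uses the symmetry $w(T,H)=w(T,H^*)$ of parallel strips, which holds only in $\R^2$. In $\hyp^2$, let $H^*$ be the tangent to $\arc{ab}$ at the point of $T$ farthest from $H$. Then $w(T,H)=d(H,H^*)\le d(c,H^*)$, so your bound $d(c,H^*)\ge h$ from step 2 points the wrong way. (The paper's ``by symmetry'' reduction glosses over the same point.)

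\textbf{A repair valid in all three planes.} Let $\nu$ be the inward normal of $H$ at $c$. Let $o_a,o_b,o_c$ be the centres of the circles carrying $\arc{bc},\arc{ca},\arc{ab}$, and $m_c$ the midpoint of $\arc{ab}$.
\begin{itemize}
\item If the geodesic from $c$ in direction $\nu$ meets $\arc{ab}$ at $x$, then $w(T,H)\ge d(x,H)=d(x,c)\ge d(x,o_c)-d(o_c,c)=r-(r-h)=h$, since $c\in[m_c,o_c]$.
\item Otherwise $\nu$ lies between the directions from $c$ to $a$ and to $o_a$ (or, symmetrically, to $b$ and to $o_b$). Then $w(T,H)\ge d(a,H)$. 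By the right-triangle formula, $d(a,H)$ decreases as $H$ rotates about $c$ toward the tangent of $\arc{bc}$ at $c$. At that tangent, your step 2 with $p=c$ gives at least $h$.
\end{itemize}

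\textbf{A smaller point on attainment in $\hyp^2$.} That the line through $a$ orthogonal to the axis supports $T$ gives only $w(T,H)\ge h$ for the tangent $H$ at $m_a$. Equality needs all of $T$ within distance $h$ of $H$. Support does not imply this, because the strip between two ultraparallel lines contains points arbitrarily far from both.
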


\begin{proof}
%Let $T_{w,r} \subset \M^2$ be a regular $r$-ball triangle such that $0 < w \leq r$, and let $v_1$, $v_2$, and $v_3$ be its vertices with $d$ being the distance between any pair of vertices.  We have three cases, $r < d$, $r>d$ and $r=d$.  If $r=d$ then it is easy to see that the distance be between $v_1$ and the midpoint of $\arc{v_2v_3}$ is going to be equal to $r=d$.  For the remaining two cases, it is easiest to compare to a regular $r$-ball triangle with radius $r=d$ and vertices $v_1$, $v_2$, and $v_3$, call this triangle $T_d$.  If $r>d$, then $\arc{v_2v_3}$ is contained in $T_d$, so the distance between $v_1$ and the midpoint of the $r$-arc $\arc{v_2}{v_3}$ is less than $d$, so $w<d<r$ if $r>d$ and the minimal width is the desired distance.  Lastly, if $r<d$ from the properties of intersections of circles of different radii, it should be clear that, other than the points $v_2$ and $v_3$, $\arc{v_2v_3}\not\subset T_d$, and so the distance between $v_1$ and $\arc{v_2v_3}$ is greater than $d$, so $w\geq d>r$, and thus $r \not\geq w$.   Therefore when $0 < w \leq r$ the minimal width of $T_{w,r}$ is the distance between any vertex and the midpoint of the $r$-arc between the other two vertices.
If $w=r$, then $T_{w,r}$ is a body of constant width, so the statement is clear. Assume $r>w$. Let $v_1,v_2,v_3$ be the vertices of $T_{w,r}$. If $m_1$ is the midpoint of the $r$-arc $\arc{v_2v_3}$ and $c_1$ is the center of the $r$-disk containing the arc $\arc{v_2v_3}$ on its boundary, then $w(T_{w,r},H)=w$ where $H$ is the unique supporting line of $T_{w,r}$ at $m_1$. Note that a width-realizing pair of supporting lines supports $T_{w,r}$ at a vertex and at a point of the opposite $r$-arc. Hence, due to the symmetry of $T_{w,r}$, it is sufficient to show that $w(T_{w,r},H')\geq w$ for any supporting line $H'$ at a point of $\arc{v_2v_3}$. Since $r>w$, $v_1$ is closer to the arc $\arc{v_2v_3}$ than $c_1$. The width $w(T_{w,r},H')$ is realized by a strip (or lune) bounded by a tangent line $H'$ to $B(c_1,r)$ and a chord through $v_1$ such that $c_1$ is never an interior point of this strip (or lune). Hence, the width $w(T_{w,r},H')$ is minimal if and only if the distance of $c_1$ from this chord is maximal. This is equivalent to the orthogonality of the chord and $[c_1,v_1]$. This concludes the proof.
\end{proof}

%Consider the regular $r$-ball triangle $T_{w,d}$ with vertices $v_1$, $v_2$, and $v_3$, where $d$ is the pairwise distance between vertices.  Then it is clear by construction that both the distance between vertices and the distance between any vertex and the midpoint of the $r$-arc between the other two vertices is equal to $d$, so $r=d=w$.  Now assume that $r > d$, and let $T$ be the regular $r$-ball triangle with the same vertices as $T_{w,d}$ and consider the arc $\arc{v_1v_2}_r$ of $T$.  It is clear that since $r > d$, $\arc{v_1v_2}_r \subset T_{w,d}$, and so the distance from $v_3$ to the midpoint of $\arc{v_1v_2}_r$ is less than $d$, the distance between vertices of $T$ so the minimal width of $T$ is less than $d$, and is the desired distance.  Lastly. let $r_1 < d$ and let $T_1$ be the regular $r_1$-ball triangle with the same vertices as $T_{w,d}$ and consider the arc $\arc{v_1v_2}_{r_1}$ of $T_1$.  From the properties of intersections of circles of different radii, it should be clear that other than the points $v_1$ and $v_2$, $\arc{v_1v_2}_{r_1} \not\subset T_{w,d}$, and therefore the distance from $v_3$ to the midpoint of $\arc{v_1v_2}_r$ is greater than $d$, and so the minimal width of $T_1$ is greater than or equal to $d$ and additionally $r$.  So if $0 < w \leq r$, the minimal width of $T_{w,r}$ is the distance between any vertex and the midpoint of the $r$-arc between the other two vertices. 
%For a convex body $K\subset\M^n$ and a supporting hyperplane $H$ to $K$, the width of $K$ with respect to $H$ is the distance of the 

\section{Minimizing the inradius}\label{sec:inradius}

For a convex body $K\subset\M^n$, let
$$
\varrho(K)=\max\{R>0\colon B(x,R)\subseteq K\text{ for some }x\in K\}.
$$
We call $\varrho(K)$ the \emph{inradius} of $K$ and a ball $B(x,\varrho(K))\subseteq K$ an \emph{inscribed ball} (or an \emph{incircle} if $n=2$) of $K$. Inscribed balls of convex bodies are not unique in general; however, they are if $\M^n=\sph^n$, as well as for h-convex bodies in $\hyp^n$ (see B\"or\"oczky, Freyer, Sagmeister \cite{BoFS}) and $r$-ball convex bodies in any space of constant curvature.

\begin{lemma}
Let $K\subset\M^n$ be an $r$-ball convex body for some positive $r$ (we also assume $r<\frac{\pi}{2}$ if $\M^n=\sph^n$). Then $K$ has a unique inscribed ball.
\end{lemma}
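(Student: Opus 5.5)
Existence of an inscribed ball follows by compactness: the set of pairs $(x,R)$ with $B(x,R)\subseteq K$ is closed, and $R$ is bounded by the radius of a ball containing $K$. So the content of the lemma is uniqueness. I would argue by contradiction. Suppose $B(x_1,\varrho)$ and $B(x_2,\varrho)$ are two distinct inscribed balls of $K$, with $\varrho=\varrho(K)$ and $x_1\neq x_2$. The goal is to build a ball of radius strictly larger than $\varrho$ inside $K$. Since $K$ is $r$-ball convex, it suffices to show that $K$ contains the $r$-ball convex hull $C=[B(x_1,\varrho)\cup B(x_2,\varrho)]_r$, and then that $C$ contains a ball of radius $>\varrho$ centred at the midpoint $m$ of $[x_1,x_2]$.

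The inclusion $C\subseteq K$ is immediate. Every radius $r$ closed ball containing $K$ also contains both inscribed balls, so $C\subseteq\bigcap\{B\colon K\subseteq B\}=K$. Note that $\varrho<r$, because $K$ lies in some ball of radius $r$ and $K$ is not itself a single radius $r$ ball containing two distinct radius-$\varrho$ balls with $\varrho=r$. Also, a radius $r$ ball $B(c,r)$ contains $B(x_i,\varrho)$ if and only if $d(c,x_i)\le r-\varrho$. Hence
\[
C=\bigcap\{B(c,r)\colon d(c,x_1)\le r-\varrho,\ d(c,x_2)\le r-\varrho\}.
\]
The admissible centres $c$ therefore range over the lens $D=B(x_1,r-\varrho)\cap B(x_2,r-\varrho)$, which is a compact convex set.

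It remains to show $B(m,\varrho+\varepsilon)\subseteq B(c,r)$ for all $c\in D$ and some $\varepsilon>0$. This is equivalent to $d(c,m)<r-\varrho$ uniformly on $D$. By compactness of $D$ and continuity, it is enough to prove the strict inequality $d(c,m)<r-\varrho$ pointwise. This is the key step: strict convexity of the distance from $c$ along a geodesic. In $\R^n$ and $\hyp^n$, the function $t\mapsto d(c,\gamma(t))$ along the segment $[x_1,x_2]$ is strictly convex unless $c$ lies on the geodesic through $x_1,x_2$. In that collinear case, $m$ lies strictly between $x_1$ and $x_2$, so a direct check again gives $d(c,m)<\max\{d(c,x_1),d(c,x_2)\}\le r-\varrho$.

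On $\sph^n$ I would instead use the cosine law. For the midpoint,
\[
\cos d(c,m)=\frac{\cos d(c,x_1)+\cos d(c,x_2)}{2\cos(d(x_1,x_2)/2)},
\]
and the denominator is $<1$ when $x_1\ne x_2$. Since $r-\varrho<\pi/2$, the cosines in the numerator are at least $\cos(r-\varrho)>0$. This gives $\cos d(c,m)>\cos(r-\varrho)$, hence $d(c,m)<r-\varrho$. The same median-formula approach handles $\hyp^n$ with $\cosh$, and $\R^n$ via the parallelogram law, which gives a uniform treatment.

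The main point needing care is exactly this uniform strict inequality, including the positivity needed on the sphere. Once it holds, $B(m,\varrho+\varepsilon)\subseteq C\subseteq K$ contradicts the maximality of $\varrho$, so the inscribed ball is unique.
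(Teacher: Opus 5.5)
Your proof is correct, and it has the same skeleton as the paper's proof. You take two inscribed balls with centres $x_1\neq x_2$, observe that their $r$-ball convex hull lies in $K$, and find a ball of radius larger than $\varrho$ about the midpoint $m$ inside that hull.

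The two arguments differ in how this last containment is established. The paper proceeds in three steps:
\begin{itemize}
\item It takes the boundary point $i$ of the hull on a half-line from $m$ orthogonal to $[x_1,x_2]$, and asserts that the radius $r$ disk supporting the hull at $i$ has its centre $c$ at distance $r-\varrho$ from the centres.
\item It uses that the hypotenuse $[c,x_1]$ of the right triangle with vertices $c,m,x_1$ is longer than the leg $[c,m]$, which gives $d(m,i)>\varrho$.
\item It invokes rotational symmetry to conclude that $B(m,d(m,i))$ lies in the hull. That $i$ is actually a nearest boundary point to $m$ is left implicit.
\end{itemize}
You instead write the hull as $\bigcap_{c\in D}B(c,r)$ over the lens $D$ of admissible centres. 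You then bound $d(c,m)$ for all $c\in D$ at once by the median formula, so no description of the boundary of the hull is needed.

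Your bound is in fact already uniform. For example, on $\sph^n$ it reads $\cos d(c,m)\geq\cos(r-\varrho)/\cos\bigl(d(x_1,x_2)/2\bigr)$, and analogously with $\cosh$ or with squares in the other two geometries. So the compactness step can be dropped.

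Equality in that bound holds exactly when $d(c,x_1)=d(c,x_2)=r-\varrho$. At such points the median formula reduces to the Pythagorean relation in the paper's right triangle. Thus the sharp form of your bound recovers the paper's enlarged radius $\varrho'$ and also justifies its symmetry step.

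The cost of your route is a short law-of-cosines computation in each geometry. You were right to flag that the strict inequality on the sphere needs $\cos(r-\varrho)>0$, which is where the assumption $r<\frac{\pi}{2}$ enters.
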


\begin{proof}
The existence is clear since $K$ is compact and has a non-empty interior. We prove the uniqueness by contradiction. Let $\varrho$ be the inradius of $K$, and let $B(p,\varrho)$ and $B(s,\varrho)$ be two balls inscribed in $K$ with $p\neq s$. Let $m$ be the midpoint of $[p,s]$, $\ell$ be the line through $p$ and $s$, and $\ell'$ be a half-line orthogonal to $\ell$ emanating from $m$. Let $E$ be the $r$-ball convex hull of $B(p,\varrho)$ and $B(s,\varrho)$. Then $E\subseteq K$. We show that $E$ contains a ball of radius $\varrho'$ with $\varrho'>\varrho$. Let $i$ be the intersection point of $\ell'$ and $\partial E$. We set $d=d(p,s)>0$ and $\varrho'=d(m,i)$, and claim that $B(m,\varrho')\subset E$. Let $c$ be the point on the half-line emanating from $i$ through $m$ with $d(c,i)=r$. Then $[c,m,p]$ is a right triangle with hypotenuse $[c,p]$ where $d(c,p)=r-\varrho$. Then
$$
\varrho'=r-d(c,m)>r-d(c,p)=\varrho
$$
from $d>0$, and clearly $\varrho'<r$, so by rotational symmetry and from $\varrho'<r$, $B(m,\varrho')\subset E$.
\end{proof}

Concerning the contact points of a convex body with its inscribed ball, we quote the following result. Let $\mathrm{relint}\, (X)$ denote the relative interior of a set $X\subset\M^n$.

\begin{lemma}[B\"or\"oczky and Sagmeister\cite{BoS22}]
\label{lemma:closest-points-on-the-boundary}
If $K\subseteq\M^n$ is a convex body and $B\left(p,\varrho\left(K\right)\right)\subset K$, then for $2\leq k\leq n+1$ there are points $t_1,\ldots,t_k\in\partial K\cap\partial B\left(p,\varrho\left(K\right)\right)$ such that $\left[t_1,\ldots,t_k\right]$ is a $\left(k-1\right)$-dimensional simplex and $p\in{\rm{relint}}(\left[t_1,\ldots,t_k\right])$.
\end{lemma}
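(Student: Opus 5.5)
We work in the tangent space at $p$ and reduce the claim to Carath\'eodory's theorem there. Write $\varrho=\varrho(K)$ and $T=\partial K\cap\partial B(p,\varrho)$. For $t\in T$ let $u(t)\in T_p\M^n$ be the unit initial velocity of the geodesic from $p$ to $t$, and set $U=\{u(t)\colon t\in T\}$. This is a compact subset of the unit sphere of $T_p\M^n\cong\R^n$. The central claim is that
\[
0\in\mathrm{conv}\,U ,
\]
where the convex hull is the linear one in $T_p\M^n$.

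To prove the claim, suppose $0\notin\mathrm{conv}\,U$. Since $\mathrm{conv}\,U$ is compact, linear separation gives a unit vector $v$ with $\langle v,u\rangle<0$ for all $u\in U$. Consider $f(q)=d(q,\partial K)=\min_{x\in\partial K}d(q,x)$ for $q\in\mathrm{int}\,K$.

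\begin{enumerate}[(i)]
\item Since $B(p,\varrho)\subset K$ and $\varrho$ is maximal, $f(p)=\varrho$, and the set of minimizers is exactly $T$.
\item For each $x$ at distance $\varrho$ from $p$ (with $\varrho<\pi/2$ in the spherical case), the function $q\mapsto d(q,x)$ is smooth near $p$. Its gradient at $p$ is $-u(x)$.
\item Danskin's theorem for a minimum of a compact family of smooth functions gives the one-sided directional derivative
\[
f'(p;v)=\min_{t\in T}\bigl(-\langle v,u(t)\rangle\bigr)>0 .
\]
\item Hence for the point $p_\varepsilon=\exp_p(\varepsilon v)$ and small $\varepsilon>0$ we get $f(p_\varepsilon)>\varrho$. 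Then $B(p_\varepsilon,\varrho')\subset K$ for some $\varrho'>\varrho$, contradicting the maximality of $\varrho$.
\end{enumerate}

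I expect this step to be the main obstacle. It needs the compactness and uniformity bookkeeping behind Danskin's theorem, namely that boundary points far from $T$ stay at distance bounded away from $\varrho$, together with a check that the distance function is smooth in the spherical case. If Danskin is to be avoided, I would argue directly: for $\varepsilon$ small, points of $\partial K$ outside a neighbourhood $N$ of $T$ have distance at least $\varrho+\delta$ from $p$. Points in $N$ have directions $u$ with $\langle v,u\rangle<-c<0$, and the first-order expansion of distance then shows their distance from $p_\varepsilon$ increases.

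Next apply Carath\'eodory's theorem in $\R^n$. Choose a minimal set $u_1,\dots,u_k\in U$ with $0\in\mathrm{conv}\{u_1,\dots,u_k\}$. By minimality the $u_i$ are affinely independent, so $k\le n+1$, and $0$ lies in the relative interior of their simplex. We have $k\ge2$ because $0$ is not a unit vector.

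It remains to transfer this back to $\M^n$. Use the projective model centred at $p$: the Beltrami--Klein model for $\hyp^n$, the gnomonic model for $\sph^n$ (legitimate since $\varrho<\pi/2$), or the identity for $\R^n$. In this model geodesics are straight segments, so geodesic simplices are linear simplices, and $t_i$ is represented by $\lambda u_i$ with the common factor $\lambda=\tanh\varrho$, $\tan\varrho$ or $\varrho$ respectively. Scaling all vertices by the same positive factor preserves both affine independence and the property that $0$ lies in the relative interior. Therefore $[t_1,\dots,t_k]$ is a $(k-1)$-dimensional simplex with $p\in\mathrm{relint}[t_1,\dots,t_k]$.
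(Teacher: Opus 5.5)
The paper gives no proof of this lemma; it is quoted from \cite{BoS22}. So there is no in-paper argument to compare yours with, and I judge the proposal on its own merits.

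Your proof is correct and complete. Each step holds:
\begin{itemize}
\item $T$ is nonempty, compact and stays away from $p$ (and from the antipode of $p$ on the sphere), so $U$ is compact.
\item Strict separation of $0$ from the compact convex set $\mathrm{conv}\,U$ gives the direction $v$.
\item The first-order estimate along $\exp_p(\varepsilon v)$ gives $d(p_\varepsilon,\partial K)>\varrho$, contradicting maximality.
\item A minimal Carath\'eodory subset has all barycentric coefficients positive and is affinely independent. This yields $2\le k\le n+1$ and $0$ in the relative interior.
\item In the Beltrami--Klein or gnomonic model centred at $p$, geodesic convex hulls, totally geodesic subspaces and relative interiors become their linear counterparts. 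Each $t_i$ is represented by $\lambda u_i$ with a common $\lambda>0$, so the conclusion transfers back.
\end{itemize}
You also read ``for $2\le k\le n+1$'' correctly as ``for some such $k$''. The universal reading is false: for a Euclidean square, no three of the four contact points have the centre in the interior of their triangle.

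Three details deserve a line each in a write-up.
\begin{itemize}
\item Danskin's theorem needs $q\mapsto d(q,x)$ to be smooth on a fixed neighbourhood of $p$, with gradient jointly continuous in $(q,x)$, for \emph{all} $x\in\partial K$, not only for $x\in T$ as in your step (ii). This holds because $d(p,x)\ge\varrho>0$ on $\partial K$. On $\sph^n$ you also need $d(p,x)\le\pi-\eta$ for some $\eta>0$, which follows because $K$ is compact and lies in an open hemisphere.
\item That same hemisphere condition, together with $B(p,\varrho)\subset K$, is what justifies $\varrho<\frac{\pi}{2}$, which you assume when invoking the gnomonic model.
\item In (iv), $B(p_\varepsilon,f(p_\varepsilon))\subset K$ holds because the open ball is connected, misses $\partial K$ and contains the interior point $p_\varepsilon$, and $K$ is closed.
\end{itemize}
Your Danskin-free alternative also works, provided the $O(\varepsilon^2)$ remainder in the expansion of $d(p_\varepsilon,x)$ is bounded uniformly for $x\in\partial K$ near $T$. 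That bound follows from the same uniform estimates on the distance function.
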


For a circular disk $B=B(z,\varrho)$ with $\varrho<r$ in $\M^2$, and a point $q\in\M^2$ with $d(q,z)<r-\varrho$, we call the closure of $[B\cup{q}]_r\setminus B$ a \emph{cap} of $B$ with \emph{apex} $q$. We say that $D\subset\M^2$ is an \emph{($r$-ball convex) cap-domain} if it is the union of a circular disk and finitely many pairwise non-overlapping caps. The following lemma shows the existence of a particular cap-domain in $r$-ball convex bodies of $\M^2$.

\begin{lemma}\label{lem:nonoverlapping}
Let $K\subset\M^2$ be an $r$-ball convex body of minimal width $w$ where $0<w\leq r$ and if $\M^2=\sph^2$, then $r<\frac{\pi}{2}$. Let $B=B(p,\varrho)$ be the incircle of $K$ with $\varrho<\frac{w}{2}$. Then, there is an $r$-ball convex cap-domain $B\cup C_1\cup C_2\cup C_3$ contained in $K$ such that $C_1,C_2,C_3$ are non-overlapping with apexes $q_1,q_2,q_3$ with $d(p,q_i)=w-\varrho$.
\end{lemma}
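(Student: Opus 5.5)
We will obtain the three apexes from the contact points of the incircle and then use the width of $K$ to push points of $K$ out to distance $w-\varrho$ from $p$. By Lemma~\ref{lemma:closest-points-on-the-boundary} with $n=2$, there are either two contact points $t_1,t_2\in\partial K\cap\partial B$ with $p\in\mathrm{relint}[t_1,t_2]$, or three contact points $t_1,t_2,t_3$ with $p$ in the interior of the triangle $[t_1,t_2,t_3]$. At each $t_i$ the body $K$ has a supporting radius $r$ disk $D_i=B(c_i,r)$. Since $B\subset K\subset D_i$ and $t_i\in\partial B\cap\partial D_i$, the disks $B$ and $D_i$ are internally tangent at $t_i$. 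Hence $c_i$ lies on the ray from $t_i$ through $p$, with $d(c_i,p)=r-\varrho$.

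\emph{Three contact points.} For each $i$, let $\ell_i$ be the half-line from $p$ opposite to $t_i$, that is, pointing away from $t_i$. Let $H_i$ be the line tangent to $D_i$ (equivalently to $B$) at $t_i$. Then $H_i$ supports $K$, since $K\subset D_i$ lies on one side of $H_i$. The width condition $w(K,H_i)\ge w$ gives a point of $K$ at distance at least $w$ from $H_i$; in the spherical case this is measured by the lune, in the hyperbolic case by the farthest ultraparallel.

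The key step is to upgrade this to a point $q_i\in K$ on $\ell_i$ with $d(p,q_i)=w-\varrho$. We argue by contradiction. Suppose $K$ meets $\ell_i$ only within distance less than $w-\varrho$ of $p$. Take a boundary point $x$ of $K$ on $\ell_i$ and a supporting $r$-disk $D'$ of $K$ at $x$. Then $K\subset D_i\cap D'$, and we estimate the width of $D_i\cap D'$ (or of $K$) in the direction $H_i$. The intended analogue of the Euclidean fact ``a body between two nearly parallel supporting lines at distance $<w$ has width $<w$'' here uses Lemma~\ref{sublem:minimaldistancebetweencircles}: along $\partial D'$, the distance to $D_i$ is monotone in arc length, which controls how far $K$ can extend from $H_i$. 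Rather than this, I would first try the cleaner route. The farthest point $y\in K$ from $H_i$ realizes $w(K,H_i)\ge w$, so $d(y,H_i)\ge w$. Since $d(p,H_i)=\varrho$, we get a point of $K$ whose distance to $p$, measured in the direction normal to $H_i$, is at least $w-\varrho$. By $r$-ball convexity, $[B\cup\{y\}]_r\subset K$. This hull contains the point of $\ell_i$ at distance $w-\varrho$ from $p$, by a rotation/monotonicity argument: the cap with apex $y$ contains the cap with apex $y'$, where $y'$ is the rotation of $y$ about $p$ onto $\ell_i$ after shrinking to distance $w-\varrho$. I expect this containment step, which must be valid uniformly in the three geometries, to be the main obstacle.

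We then set $q_i$ to be this point and $C_i$ the cap of $B$ with apex $q_i$. The condition $d(p,q_i)=w-\varrho<r-\varrho$ holds since $w\le r$, so the cap is defined.

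\emph{Non-overlapping.} The cap $C_i$ lies in the angular sector of $p$ around $\ell_i$ bounded by the two tangent points from $q_i$ to $B$. The half-angle $\alpha$ of this sector is determined by $\varrho$ and $w-\varrho$ in the relevant trigonometry. The directions of $\ell_1,\ell_2,\ell_3$ are pairwise separated by the angles of the triangle $[t_1,t_2,t_3]$ at $p$ (exterior-angle complements). Using that each cap also lies in $D_j$, and that $C_i\cap C_j\subset\partial B$ exactly when the tangent segments do not cross, we must show that the caps do not overlap. If they do overlap, $\varrho<w/2$ would fail: two overlapping caps force a supporting line $H_j$ whose opposite width is less than $w$. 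If this obstruction instead appears, we replace the $t_i$ by suitable contact-point choices; the paper presumably adjusts them here.

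\emph{Two contact points.} In this case $t_1,t_2$ are antipodal on $\partial B$ and $K\subset D_1\cap D_2$. The width of this lens in the direction of $H_1$ is $2\varrho<w$, a contradiction. So this case cannot occur, and three contact points always exist.

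Finally, the union $B\cup C_1\cup C_2\cup C_3$ is contained in $K$, and it is an $r$-ball convex cap-domain by definition.
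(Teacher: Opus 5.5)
\textbf{There is a genuine gap: your key step is false.} An $r$-ball convex $K$ need not contain a point of the half-line $\ell_i$ opposite to $t_i$ at distance $w-\varrho$ from $p$.

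\emph{Counterexample.} Take the Euclidean triangle with vertices $A=(0,0)$, $B=(10,0)$, $C=(1,1)$. Then:
\begin{itemize}
\item $w=1$, the altitude onto the longest side $AB$;
\item $\varrho=10/(10+\sqrt{82}+\sqrt2)\approx 0.489<w/2$;
\item the incenter is $p\approx(1.179,0.489)$ and the contact point on $AB$ is $t\approx(1.179,0)$.
\end{itemize}
The half-line from $p$ opposite to $t$ is vertical. It leaves the triangle through $BC$ (the line $x+9y=10$) at height $\approx 0.980<1$. So the point of $\ell$ at distance $w-\varrho$ from $p$, at height $1$, is outside.

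The $r$-disk triangle $[A,B,C]_r$ with $r$ very large is an admissible $r$-ball convex body, and all these quantities change by arbitrarily little. Each of its three arcs touches the incircle exactly once, so the three contact points are forced and cannot be re-chosen. Hence the failure persists in your setting. In particular your ``rotation/monotonicity'' step cannot be proved: here $y=C$ is the farthest point from $H$, and $[B\cup\{C\}]_r\subset[A,B,C]_r$ misses the rotated apex. Your non-overlapping paragraph is also not yet an argument, since it ends by hoping the contact points can be re-chosen.

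\textbf{The missing idea, which is how the paper proceeds, is to let the width choose the direction of $q_i$ instead of fixing $\ell_i$.}
\begin{itemize}
\item Let $D_1,D_2,D_3$ be the supporting $r$-disks at $t_1,t_2,t_3$. Then $K\subset T=D_1\cap D_2\cap D_3$.
\item $T\setminus \mathrm{int}\,B$ splits into three corner regions at the vertices $z_1,z_2,z_3$ of $T$, and these pairwise meet only at contact points.
\item A point $y_i\in K$ realizing $w(K,H_i)$ lies in the corner region at the vertex $z_i$ opposite to the arc through $t_i$. The ball $B$ and the other two corner regions stay within distance $2\varrho<w$ of $H_i$.
\item By the triangle inequality, $d(p,y_i)\ge d(y_i,H_i)-d(p,H_i)\ge w-\varrho$.
\item Take $q_i\in[p,y_i]$ with $d(p,q_i)=w-\varrho$. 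It lies in $K$ by convexity and in the same corner region as $y_i$.
\item The cap with apex $q_i$ lies in $K$ by $r$-ball convexity.
\item An $r$-cap of $B$ whose apex lies in a corner region stays inside that region, so non-overlap of $C_1,C_2,C_3$ is automatic.
\end{itemize}

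Your lens argument excluding the two-contact-point case is fine, and it justifies the existence of three contact points, which the paper takes for granted.
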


\begin{proof}
As $\varrho<\frac{w}{2}$, there are three points $t_1,t_2,t_3\in\partial K\cap\partial B$ such that $p$ is an interior point in the triangle $\left[t_1, t_2,t_3\right]$ by Lemma~\ref{lemma:closest-points-on-the-boundary}. Let $B_1,B_2,B_3$ be the circular disks of radius $r$ containing $B$ such that the boundaries of $B$ and $B_i$ are tangent in the point $t_i$. Since $K$ is $r$-ball convex, $K\subseteq B_1\cap B_2\cap B_3$. Hence, the minimal width of $T=B_1\cap B_2\cap B_3$ is at least $w$ by the monotonicity of the minimal width with respect to containment. Let $z_1,z_2,z_3$ be the vertices of the $r$-ball convex triangle $T$ opposite to the arcs containing $t_1,t_2,t_3$, respectively. Let $H_1,H_2,H_3$ be the supporting lines of $K$ (and $T$) at $t_1,t_2,t_3$, respectively. Since $w(K,H_i)\geq w$, in the case of $\M^2=\R^2$ or $\M^2=\hyp^2$, a farthest supporting hyperplane to $K$ parallel to $H_i$ intersects $T$ in its cap-region with apex $z_i$. Similarly, if $\M^n=\sph^2$, a supporting hyperplane to $K$ that realizes the width $w(K,H_i)$ intersects $T$ in its cap-region with apex $z_i$. Let $y_i$ be such an intersection point. From the triangle inequality and the fact that the minimal width of $T$ is at least $w$, $d(p,y_i)\geq w-\varrho$, there exist points $q_i\in[p,y_i]$ with $d(p,q_i)=w-\varrho$. Since $T$ is convex and $\varrho<\frac{w}{2}$, $q_i\in C_i$. The caps $C_1,C_2,C_3$ are non-overlapping by definition, and it is easy to see that any $r$-cap of $B$ with apex contained in $C_i$ is a subset of $C_i$, so the caps with apexes $q_1,q_2,q_3$ satisfy the desired properties.
\end{proof}

We can derive the following formula for the inradius of $r$-ball convex regular triangles of width $w$.

\begin{lemma}\label{lem:rho0}
Let $T_{w,r}\subset\M^2$ be an $r$-ball convex regular triangle of minimal width $w$ where $0<w\leq r$ and if $\M^2=\sph^2$, we also assume $r<\frac{\pi}{2}$, and let $\varrho_0$ be the inradius of $T_{w,r}$. Then
\begin{equation}
\varrho_0(r,w)=
\begin{cases}
\frac{r+w-\sqrt{\frac{4r^2-(r-w)^2}{3}}}{2},\text{ if }\M^2=\R^2,\\
\frac{r+w-\arcosh\frac{4\cosh r-\cosh(r-w)}{3}}{2},\text{ if }\M^2=\hyp^2,\\
\frac{r+w-\arccos\frac{4\cos r-\cos(r-w)}{3}}{2},\text{ if }\M^2=\sph^2.
\end{cases}
\end{equation}
\end{lemma}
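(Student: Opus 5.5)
By symmetry, the incenter of $T_{w,r}$ is its centroid $p$. Let $v_1$ be a vertex, $m_1$ the midpoint of the opposite $r$-arc $\arc{v_2v_3}$, and $c_1$ the center of the radius $r$ disk whose boundary contains $\arc{v_2v_3}$. The plan is to find $\varrho_0=d(p,m_1)$ by applying the law of cosines of $\M^2$ in the triangle $[c_1,p,c_2]$.

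\textbf{Step 1 (collinearities).} By the reflection symmetry in the axis through $v_1$ and $m_1$, the points $v_1$, $p$, $m_1$ and $c_1$ lie on one geodesic. By the preceding lemma on the width of $T_{w,r}$, $d(v_1,m_1)=w$. The incircle $B(p,\varrho_0)$ touches $\arc{v_2v_3}$ at $m_1$, and $m_1$ lies on $[c_1,p]$ extended. Hence
$$d(c_1,p)=r-\varrho_0 \quad\text{and}\quad d(c_1,v_1)=r-w,$$
where we use $w\le r$, so that $v_1$ lies between $c_1$ and $m_1$. Along the axis the points are ordered $c_1, v_1, p, m_1$.

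\textbf{Step 2 (a second pair of distances).} The vertex $v_1$ lies on the arcs $\arc{v_1v_2}$ and $\arc{v_1v_3}$, whose disks have centers $c_3$ and $c_2$. So
$$d(c_2,v_1)=d(c_3,v_1)=r.$$
By symmetry we also have $d(c_i,p)=r-\varrho_0$ for every $i$, and the angles $\angle c_ipc_j$ all equal $2\pi/3$.

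\textbf{Step 3 (law of cosines).} Set $a=r-\varrho_0$. The point $v_1$ lies on the ray from $p$ through $c_1$, at distance $a-(r-w)=w-\varrho_0$ from $p$. Apply the law of cosines in the triangle $[c_2,p,v_1]$, using $\angle c_2pv_1=2\pi/3$:
$$\cos r=\cos a\cos(w-\varrho_0)-\tfrac12\sin a\sin(w-\varrho_0)\qquad(\text{sphere}),$$
with the analogous formula involving $\cosh$, $\sinh$ and a $+\tfrac12$ term in $\hyp^2$, and $r^2=a^2+(w-\varrho_0)^2+a(w-\varrho_0)$ in $\R^2$.

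\textbf{Step 4 (solving for $\varrho_0$).} Introduce the half-sum and half-difference variables
$$s=\tfrac{a+(w-\varrho_0)}{2}=\tfrac{r+w}{2}-\varrho_0,\qquad t=\tfrac{a-(w-\varrho_0)}{2}=\tfrac{r-w}{2}.$$
The quantity $t$ is already known. Rewrite Step 3 with product-to-sum identities. In the spherical case,
$$\cos a\cos b=\tfrac12[\cos(a-b)+\cos(a+b)],\qquad \sin a\sin b=\tfrac12[\cos(a-b)-\cos(a+b)],$$
where $b=w-\varrho_0$. This turns Step 3 into
$$\cos r=\tfrac14\cos(r-w)+\tfrac34\cos(2s),$$
so $\cos(2s)=\frac{4\cos r-\cos(r-w)}{3}$. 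Since $2s=r+w-2\varrho_0$, this gives the stated formula $\varrho_0=\frac{r+w-2s}{2}$. The hyperbolic case is the same computation with hyperbolic functions. The Euclidean case follows from
$$a^2+ab+b^2=\tfrac34(a+b)^2+\tfrac14(a-b)^2,$$
which gives $(2s)^2=\frac{4r^2-(r-w)^2}{3}$.

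The main obstacle is Step 1: justifying that the incircle is centered at the centroid and touches each arc at its midpoint. This follows from the uniqueness of the incircle, proved earlier, together with the threefold symmetry of $T_{w,r}$. A further point needing care is the ordering of $c_1, v_1, p, m_1$ along the axis, which needs $w\le r$.
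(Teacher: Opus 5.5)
Your proposal is correct and is essentially the paper's proof: the paper applies the law of cosines in the triangle $[c_3,p,v_2]$, with sides $r$, $r-\varrho_0$, $w-\varrho_0$ and angle $\frac{2\pi}{3}$ at $p$. This is your triangle $[c_2,p,v_1]$ after relabeling (your opening sentence names $[c_1,p,c_2]$, but Step 3 correctly uses $[c_2,p,v_1]$), and the paper then does the same $\frac34$/$\frac14$ product-to-sum rearrangement. The only minor difference is in the Euclidean case, where the paper solves a quadratic and discards the root exceeding $w$, whereas your half-sum variable selects the root directly because $2s=a+b>0$; likewise $0<2s<\pi$ is what justifies taking $\arccos$ on the sphere.
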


\begin{proof}
We denote the vertices of $T_{w,r}$ by $v_1,v_2,v_3$, and $p$ denotes its incenter. Let $c_3$ be the center of the circle of radius $r$ whose shorter arc $\arc{v_1v_2}$ is on the boundary of $T_{w,r}$. Let $m_3$ be the midpoint of $\arc{v_1v_2}$; see Figure~\ref{fig1}.

\begin{figure}[h]
\begin{center}
\begin{tikzpicture}[line cap=round,line join=round,>=triangle 45,x=1cm,y=1cm,scale=2.4]
\clip(-1.0660254037844387,-0.75) rectangle (1.3412368303899427,1.2);
\coordinate (p) at (0,0);
\coordinate (v1) at (0,1);
\coordinate (v2) at (-0.8660254037844387,-0.5);
\coordinate (v3) at (0.8660254037844387,-0.5);
\coordinate (c3) at (1.1412368303899427,-0.6588933912347493);
\coordinate (m3) at (-0.6025411342290063,0.3478772860449396);
\draw [shift={(1.1412368303899427,-0.6588933912347493)},line width=0.8pt]  plot[domain=2.173389638068634:3.062598117914354,variable=\t]({1*2.0135413545593766*cos(\t r)+0*2.0135413545593766*sin(\t r)},{0*2.0135413545593766*cos(\t r)+1*2.0135413545593766*sin(\t r)});
\draw [shift={(0,1.3177867824694975)},line width=0.8pt]  plot[domain=2.173389638068634:3.062598117914354,variable=\t]({-0.5*2.0135413545593766*cos(\t r)+-0.8660254037844387*2.0135413545593766*sin(\t r)},{0.8660254037844387*2.0135413545593766*cos(\t r)+-0.5*2.0135413545593766*sin(\t r)});
\draw [shift={(-1.1412368303899434,-0.6588933912347478)},line width=0.8pt]  plot[domain=2.173389638068634:3.062598117914354,variable=\t]({-0.5*2.0135413545593766*cos(\t r)+0.8660254037844384*2.0135413545593766*sin(\t r)},{-0.8660254037844384*2.0135413545593766*cos(\t r)+-0.5*2.0135413545593766*sin(\t r)});
\draw [line width=2pt] (p)-- (v2);
\draw [line width=2pt] (c3)-- (v2);
\draw [line width=0.8pt] (m3)-- (p);
\draw [line width=2pt] (p)-- (v3);
\draw [line width=2pt] (v3)-- (c3);
\node[above] at (v1) {$v_1$};
\node[left] at (v2) {$v_2$};
\node[right] at (v3) {$v_3$};
\node[above] at (p) {$p$};
\node[right] at (c3) {$c_3$};
\node[right] at (m3) {$m_3$};
\draw pic["$\frac{2\pi}{3}$", draw=black, angle radius=8mm, angle eccentricity=0.6] {angle=v2--p--v3};
%\draw (0.07047673765881518,-0.5297205607512919) node[anchor=north west] {$r$};
%\draw (1.0109759309275446,-0.5382705534173713) node[anchor=north west] {$r-w$};
%\draw (0.34692650052871443,-0.1592208785545189) node[anchor=north west] {$w-\varrho_0$};
%\draw (-0.2658229738736396,0.19702881586545506) node[anchor=north west] {$\varrho_0$};
\begin{scriptsize}
\draw [fill=black] (0,0) circle (1pt);
\draw [fill=black] (0,1) circle (1pt);
\draw [fill=black] (-0.8660254037844387,-0.5) circle (1pt);
\draw [fill=black] (0.8660254037844384,-0.5) circle (1pt);
\draw [fill=black] (1.1412368303899427,-0.6588933912347493) circle (1pt);
\draw [fill=black] (-0.6025411342290063,0.3478772860449396) circle (1pt);
\end{scriptsize}
\end{tikzpicture}
\end{center}
\caption{The auxiliary triangle $[c_3,p,v_2]$ to obtain $\varrho_0$}
\label{fig1}
\end{figure}
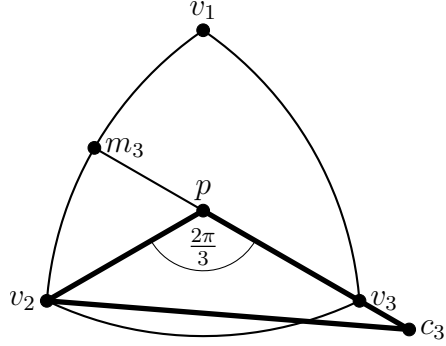

Then $d(c_3,v_2)=d(c_3,m_3)=r$, $d(c_3,p)=d(c_3,m_3)-d(p,m_3)=r-\varrho_0$ and $d(p,v_2)=d(p,v_3)=d(m_3,v_3)-d(m_3,p)=w-\varrho_0$. We also observe that $\angle(c_3,p,v_2)=\angle(v_3,p,v_2)=\frac{2\pi}{3}$. Now we use the law of cosines in the ambient plane.

%\medskip

%\noindent\textbf{Case 1.} \emph{$\M^2=\R^2$.}

\begin{case}
$\M^2=\R^2$.
\end{case}
\begin{equation}
r^2=(w-\varrho_0)^2+(r-\varrho_0)^2+(w-\varrho_0)(r-\varrho_0).
\end{equation}
This can be rearranged as
\begin{equation}
3\varrho_0^2-3(r+w)\varrho_0+(w^2+wr)=0,
\end{equation}
whose solutions for $\varrho_0$ are
\begin{equation}
\frac{r+w\pm\sqrt{\frac{1}{9}\left(9(r+w)^2-12(w^2+wr)\right)}}{2}=\frac{r+w\pm\sqrt{\frac{4r^2-(r-w)^2}{3}}}{2}.
\end{equation}
We note that since $r\geq w$, this leads to two different real solutions, but
\begin{equation}
\frac{r+w+\sqrt{\frac{4r^2-(r-w)^2}{3}}}{2}>w,
\end{equation}
so the only possible solution is
\begin{equation}
\frac{r+w-\sqrt{\frac{4r^2-(r-w)^2}{3}}}{2}.
\end{equation}
It can be easily checked that
\begin{equation}
0<\frac{r+w-\sqrt{\frac{4r^2-(r-w)^2}{3}}}{2}<w.
\end{equation}

\begin{case}
    $\M^2=\hyp^2$.
\end{case}

%\noindent{\bf Case 2 }$\M^2=\hyp^2$

\begin{align*}
\cosh r&=\cosh(w-\varrho_0)\cosh(r-\varrho_0)+\frac{1}{2}\sinh(w-\varrho_0)\sinh(r-\varrho_0)\\
&=\frac{3}{4}\left(\cosh(w-\varrho_0)\cosh(r-\varrho_0)+\sinh(w-\varrho_0)\sinh(r-\varrho_0)\right)\\
&\qquad +\frac{1}{4}\left(\cosh(w-\varrho_0)\cosh(r-\varrho_0)-\sinh(w-\varrho_0)\sinh(r-\varrho_0)\right)\\
&=\frac{3}{4}\cosh(r+w-2\varrho_0)+\frac{1}{4}\cosh(r-w).
\end{align*}
From this we obtain
\begin{equation}
\varrho_0=\frac 12\left (r+w-\arcosh\biggl (\frac{4\cosh r-\cosh(r-w)}{3}\biggr )\right )
\end{equation}
as the unique solution.

%\medskip

%\noindent\textbf{Case 3.} \emph{$\M^2=\sph^2$.}

\begin{case}
    $\M^2=\sph^2$.
\end{case}
\begin{align*}
\cos r&=\cos(w-\varrho_0)\cos(r-\varrho_0)-\frac{1}{2}\sin(w-\varrho_0)\sin(r-\varrho_0)\\
&=\frac{3}{4}\left(\cos(w-\varrho_0)\cos(r-\varrho_0)-\sin(w-\varrho_0)\sin(r-\varrho_0)\right)\\
&\qquad +\frac{1}{4}\left(\cos(w-\varrho_0)\cos(r-\varrho_0)+\sin(w-\varrho_0)\sin(r-\varrho_0)\right)\\
&=\frac{3}{4}\cos(r+w-2\varrho_0)+\frac{1}{4}\cos(r-w).
\end{align*}
From this we can express $\varrho_0$ as
\begin{equation}
\varrho_0=\frac 12\left (r+w-\arccos\biggl (\frac{4\cos r-\cos(r-w)}{3}\biggr)\right ).
\end{equation}
\end{proof}

\begin{corollary}\label{cor:rho0:monotone}
The inradius of $T_{w,r}$ increases in $w$ for any fixed $r$, and decreases in $r$ for any fixed $w$.
\end{corollary}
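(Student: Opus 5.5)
The plan is to differentiate the closed formulas of Lemma~\ref{lem:rho0}. The cleanest way is through the relation they came from rather than through the formulas themselves. Write $u=r+w-2\varrho_0$. By the proof of Lemma~\ref{lem:rho0}, $u=d(c_3,p)+d(p,v_2)$, and $u$ satisfies
\[
3\cosh u=4\cosh r-\cosh(r-w)\ \text{in }\hyp^2,\qquad 3\cos u=4\cos r-\cos(r-w)\ \text{in }\sph^2,
\]
and $3u^2=4r^2-(r-w)^2$ in $\R^2$. In all three cases $\varrho_0=(r+w-u)/2$, so
\[
\partial_w\varrho_0=\tfrac12\left(1-\partial_w u\right),\qquad \partial_r\varrho_0=\tfrac12\left(1-\partial_r u\right).
\]
It therefore suffices to show $\partial_w u<1$ and $\partial_r u>1$. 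We use that $0<\varrho_0<w$, which is established in Lemma~\ref{lem:rho0} (geometrically, $d(p,v_2)=w-\varrho_0>0$). Consequently $u>r-w\ge 0$.

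\emph{Dependence on $w$.} Implicit differentiation in the hyperbolic case gives $3\sinh u\,\partial_w u=\sinh(r-w)$. Since $\sinh$ is increasing and $u>r-w\ge0$, we have $\sinh(r-w)<\sinh u<3\sinh u$, so $\partial_w u<1$. The spherical case is identical: $3\sin u\,\partial_w u=\sin(r-w)$, with $0\le r-w<u<\pi/2$ so that $\sin$ is increasing there. In the Euclidean case, $3u\,\partial_w u=r-w<u$. In each case $\partial_w\varrho_0>0$.

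\emph{Dependence on $r$.} In the hyperbolic case, $3\sinh u\,\partial_r u=4\sinh r-\sinh(r-w)$, which is positive. Hence $\partial_r u>1$ is equivalent to
\[
(4\sinh r-\sinh(r-w))^2>9\sinh^2u=(4\cosh r-\cosh(r-w))^2-9.
\]
Expanding with $a=r$ and $b=r-w$, the difference of the two sides is
\[
8(\cosh a\cosh b-\sinh a\sinh b)-8=8(\cosh w-1)>0.
\]
The spherical computation is the same with the signs flipped, and it yields $8(1-\cos w)>0$. Here one must check that $\sin u>0$ and that $4\sin r-\sin(r-w)>0$; both hold because $u,r\in(0,\pi/2)$ and $r-w<r$. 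In the Euclidean case, $3u\,\partial_r u=3r+w$, and $(3r+w)^2-9u^2=(3r+w)^2-3(4r^2-(r-w)^2)=4w^2>0$. In each case $\partial_r\varrho_0<0$.

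The only step that needs care is the $r$-monotonicity. There a direct sign comparison does not work, and one has to square and then use the defining relation for $\cosh u$ (respectively $\cos u$) to collapse the difference to $\cosh w-1$ (respectively $1-\cos w$). In the spherical case one must also make sure all angles stay in $(0,\pi/2)$, which is where the hypothesis $r<\frac{\pi}{2}$ enters.
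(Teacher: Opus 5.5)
Your strategy is the paper's: differentiate the closed form of Lemma~\ref{lem:rho0} and check signs. Implicit differentiation in $u=r+w-2\varrho_0$ reproduces the paper's derivatives (e.g.\ $\sqrt{(4\cosh r-\cosh(r-w))^2-9}=3\sinh u$). Your $r$-computations are exactly the paper's reductions to $\cosh w>1$, $\cos w<1$ and $4w^2>0$. For the $w$-dependence you compare $\sinh(r-w)$ with $\sinh u$ directly instead of squaring, which is fine in $\R^2$ and $\hyp^2$.

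In $\sph^2$, however, your claim $u<\frac{\pi}{2}$ is false. The parameters $r=w=1.5$ are admissible, and then $\cos u=\frac{4\cos 1.5-1}{3}\approx-0.24$, so $u\approx 1.81>\frac{\pi}{2}$. As $r=w\to\frac{\pi}{2}$ one even gets $\cos u\to-\frac13$. So ``$\sin(r-w)<\sin u$ because $\sin$ is increasing on $[0,\frac{\pi}{2})$'' is not a valid justification, and $\sin u>0$ does not follow from $u<\frac{\pi}{2}$ either. Your closing remark about where $r<\frac{\pi}{2}$ enters is therefore also off.

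The needed inequalities are still true, and the repair is short. Since $\cos r>0$ and $\cos r<\cos(r-w)$ (this is where $0\le r-w<r<\frac{\pi}{2}$ is used), $3\cos u=4\cos r-\cos(r-w)$ lies strictly between $-3\cos(r-w)$ and $3\cos(r-w)$. Hence $r-w<u<\pi-(r-w)$, which gives $u\in(0,\pi)$, $\sin u>0$ and $\sin u>\sin(r-w)$. Alternatively, square as the paper does: $9\sin^2u-\sin^2(r-w)=8\sin^2 r+8\cos r\,(\cos(r-w)-\cos r)>0$. With this fix the rest of your spherical argument goes through, including the $r$-part, which only needs $\sin u>0$ and $4\sin r-\sin(r-w)>0$.
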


\begin{proof}
Consider the partial derivatives of $\varrho_0$.
 %Fix $r$, and consider the function $\varrho_0(r,\cdot)\colon(0,r]\to\R$ defined as
%\[
%\varrho_0(w)=
%\begin{cases}
%\frac{r+w-\sqrt{\frac{4r^2-(r-w)^2}{3}}}{2}\text{ if }\M^2=\R^2,\\
%\frac{r+w-\arcosh\frac{4\cosh r-\cosh(r-w)}{3}}{2}\text{ if }\M^2=\hyp^2,\\
%\frac{r+w-\arccos\frac{4\cos r-\cos(r-w)}{3}}{2}\text{ if }\M^2=\sph^2.
%\end{cases}
%\]
%Then
\[
2\frac{\varrho_0(r,w)}{\partial w}=
\begin{cases}
1-\frac{r-w}{\sqrt{9r^2+6rw-3w^2}},\text{ if }\M^2=\R^2,\\
1-\frac{\sinh(r-w)}{\sqrt{(4\cosh r-\cosh(r-w))^2-9}},\text{ if }\M^2=\hyp^2,\\
1-\frac{\sin(r-w)}{\sqrt{9-(4\cos r-\cos(r-w))^2}},\text{ if }\M^2=\sph^2,
\end{cases}
\]
and
\[
2\frac{\varrho_0(r,w)}{\partial r}=
\begin{cases}
1-\frac{3r+w}{\sqrt{9r^2+6rw-3w^2}},\text{ if }\M^2=\R^2,\\
1-\frac{4\sinh r-\sinh(r-w)}{\sqrt{(4\cosh r-\cosh(r-w))^2-9}},\text{ if }\M^2=\hyp^2,\\
1-\frac{4\sin r-\sin(r-w)}{\sqrt{9-(4\cos r-\cos(r-w))^2}},\text{ if }\M^2=\sph^2.
\end{cases}
\]
Note that the numerators in $\frac{\partial\varrho_0(r,w)}{\partial w}$ and $\frac{\partial\varrho_0(r,w)}{\partial r}$ are both non-negative in all three cases. It remains to show that the right-hand sides are always positive.

%\medskip

%\noindent {\textbf{Case 1.} $\M^2=\R^2$.}

\setcounter{case}{0}
\begin{case}
  $\M^2=\R^2$.  
\end{case}
If $r>0$ is fixed, then the monotonicity of $\varrho_0$ with respect to $w$ is equivalent to
\[
(r-w)^2<9r^2+6rw-3w^2\iff 0<8r^2+8rw-4w^2,
\]
which clearly holds as $w\leq r$.

On the other hand, for a fixed $w>0$, the monotonicity of $\varrho_0$ with respect to $r$ is equivalent to
\[
9r^2+6rw-3w^2<(3r+w)^2,
\]
and that is also obvious.
%\medskip

%\noindent{\textbf{Case 2.} $\M^2=\hyp^2$.}

\begin{case}
  $\M^2=\hyp^2$.  
\end{case}
The monotonicity of $\varrho_0$ with respect to $w$ is equivalent to
\[
\sinh^2(r-w)<(4\cosh r-\cosh(r-w))^2-9\iff 0<16\cosh^2 r-8\cosh r\cosh(r-w)-8,
\]
which again follows from $w\leq r$.

The monotonicity of $\varrho_0$ with respect to $r$ is equivalent to
\begin{align*}
(4\cosh r-\cosh(r-w))^2-9&<(4\sinh r-\sinh(r-w))^2\\\iff  1&<\cosh r\cosh(r-w)-\sinh r\sinh(r-w)=\cosh w,
\end{align*}
and this holds trivially.
%\medskip

%\noindent{\textbf{Case 3.} $\M^2=\sph^2$.}

\begin{case}
  $\M^2=\sph^2$.  
\end{case}
Similarly, on the sphere, the monotonicity of $\varrho_0$ with respect to $w$ is equivalent to
\[
\sin^2(r-w)<9-(4\cos r-\cos(r-w))^2\iff 0<8+8\cos r\cos(r-w)-16\cos^2 r,
\]
which is again a consequence of $w\leq r$.

Finally, the monotonicity of $\varrho_0$ with respect to $r$ is equivalent to
\begin{align*}
9-(4\cos r-\cos(r-w))^2&<(4\sin r-\sin(r-w))^2\\\iff 1&>\cos r\cos(r-w)+\sin r\sin(r-w)=\cos w.
\end{align*}
%which is again clear.
\end{proof}

Now we have all the necessary information to show that $r$-ball convex regular triangles minimize the inradius among $r$-ball convex bodies with fixed minimal width.

%\begin{theorem}\label{thm:Blaschke}
%Let $K\subset\M^2$ be an $r$-ball convex body of minimal width $w$ where $0<w\leq r$ and if $\M^2=\sph^2$, we also assume $r\leq\frac{\pi}{2}$. Let $T_{w,r}\subset\M^2$ denote an $r$-ball convex regular triangle of minimal width $w$. Then
%\[
%\varrho\left(K\right)\geq\varrho\left(T_{w,r}\right),
%\]
%with equality if and only if $K$ and $T_{w,r}$ are congruent.
%\end{theorem}

\begin{proof}[Proof of Theorem~\ref{thm:Blaschke}]

Let $\varrho=\varrho(K)$ and $\varrho_0=\varrho(T_{w,r})$. We observe that $\varrho\leq\frac{w}{2}$. By Lemma \ref{lem:rho0}, it can easily be checked that $\varrho_0<\frac{w}{2}$, so we can assume without loss of generality that $\varrho<\frac{w}{2}$. If $\varrho>\varrho_0$, then we have nothing to prove, so we assume otherwise. Let $B\subseteq K$ be the incircle of $K$ centered at $p$. Then, by Lemma \ref{lem:nonoverlapping}, there is a cap-domain $B\cup C_1\cup C_2\cup C_3$ contained in $K$ such that the apexes $q_i$ of the caps $C_i$ are of distance $w-\varrho$ from $p$, and the caps $C_1,C_2,C_3$ are pairwise non-overlapping, and hence a cap $C_i$ covers at most one third of the boundary of $B$.

Consider an $r$-ball convex regular triangle $T'$ of minimal width $w'$ whose incircle is $B$ and vertices are $v_1,v_2,v_3$. Since $T'$ is the union of $B$ and three congruent $r$-caps such that these caps cover the boundary, we obtain $w'-\varrho=d(p,v_i)\geq d(p,q_i)=w-\varrho$. This yields $w'\geq w$, and, in turn, $\varrho_0\leq\varrho$ by Corollary~\ref{cor:rho0:monotone}.

In the case of equality, the points $q_1,q_2,q_3$ are the vertices of an $r$-ball convex regular triangle $T\subseteq K$ of minimal width $w$. On the other hand, since $K$ is $r$-ball convex, $K$ is contained by the three circular disks of radius $r$ that contain $B$ and are tangent to it at $t_1,t_2,t_3$, respectively, where $t_i$ is the contact point of $\partial B$ and $\partial K$ opposite to the cap-domain $C_i$. But the intersection of these three circles is $T$, so in this case $K=T$.
\end{proof}

\section{Minimizing the area}\label{sec:area}

In this section, we prove Theorem~\ref{thm:isominwidth:spindleconvex}.

\begin{lemma}\label{lem:areaofhexagonsincreases}
Let $B\subset\M^2$ be a circular disk centered at the incenter $p$ of $T_{w,r}$ and of radius $\varrho\in\left(\varrho_0,\frac{w}{2}\right)$ where $\varrho_0$ denotes the inradius of $T_{w,r}$. Let $q_1,q_2,q_3$ be points of $\M^2$ such that $d\left(p,q_i\right)=w-\varrho$ and $\angle\left(q_i,p,q_j\right)=\frac{2\pi}{3}$ for $i\neq j$. Let the points $t_i$ be the intersection of the half-line emanating from $q_i$ through $p$ and $\partial B$. Finally, let $Q_{w,r,\varrho}$ be the $r$-ball convex hull of $q_1,q_2,q_3,t_1,t_2,t_3$. Then, the area of $Q_{w,r,\varrho}$ is greater than the area of $T_{w,r}$.
\end{lemma}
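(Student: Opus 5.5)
The plan is to reduce the inequality to a one-variable monotonicity statement. The key observation is that at the endpoint $\varrho=\varrho_0$ the hull $Q_{w,r,\varrho_0}$ coincides with $T_{w,r}$. Indeed, then $d(p,q_i)=w-\varrho_0$ is the incenter–vertex distance of $T_{w,r}$ (see the proof of Lemma~\ref{lem:rho0}), and $t_i$ is the midpoint of the $r$-arc opposite $q_i$, which already lies on $\partial T_{w,r}$. So it suffices to show that $A(\varrho)=\area(Q_{w,r,\varrho})$ is continuous on $[\varrho_0,\frac w2)$ and strictly increasing there.

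\emph{Boundary structure.} First I would check that $Q_{w,r,\varrho}$ is an $r$-disk polygon with exactly six sides, namely the $r$-arcs $\arc{q_it_j}$ for $i\neq j$, so that its vertices alternate $q_1,t_3,q_2,t_1,q_3,t_2$. Note that $\angle(q_i,p,t_j)=\frac{\pi}{3}$ for $i\neq j$, since $t_j$ is antipodal to $q_j$ with respect to $p$. To verify the boundary structure I would show that each of the six disks of radius $r$ through a consecutive pair, for example $q_1$ and $t_3$, contains the remaining four points. I expect this to follow from $\varrho>\varrho_0$: for $\varrho=\varrho_0$ the points $t_j$ lie exactly on the $r$-arcs of $T_{w,r}$, and moving $t_j$ toward $p$ and $q_i$ away from $p$ only pushes the $t$'s further inside. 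By the $2\pi/3$-rotational symmetry and the reflection symmetry, $Q_{w,r,\varrho}$ then splits into six congruent pieces $P(\varrho)$. Each piece is bounded by the segments $[p,q]$ and $[p,t]$, with $d(p,q)=a=w-\varrho$, $d(p,t)=b=\varrho$ and angle $\frac{\pi}{3}$ at $p$, and by the $r$-arc $\arc{qt}$ bulging away from $p$.

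\emph{Area of a piece.} I would write $\area P=\area[p,q,t]+S(c)$. Here $[p,q,t]$ is the geodesic triangle, and $S(c)$ is the area of the circular segment cut off from a radius-$r$ disk by a chord of length $c=d(q,t)$; $S$ is an explicit increasing function of $c$. In $\R^2$ this gives $\area[p,q,t]=\frac{\sqrt3}{4}ab$ and $c^2=w^2-3ab$. Since $a+b=w$ is fixed, increasing $\varrho$ toward $\frac w2$ increases $ab$: the triangle grows, but the chord shrinks and so does the segment. The proof therefore reduces to showing $\frac{d}{d\varrho}\area P>0$, that is,
\[
\tfrac{\sqrt3}{4}\,(a-b)\;>\;S'(c)\,\tfrac{3(a-b)}{2c},
\]
equivalently $S'(c)<\frac{\sqrt3}{6}c$, where $S'(c)$ is the height of the segment (the arc's sagitta over the chord) multiplied by an explicit factor. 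In the curved cases I would use the analogous formulas. The triangle area is obtained from Gauss–Bonnet via its angle excess or defect, using the law of cosines with the angle $\frac{\pi}{3}$ exactly as in Lemma~\ref{lem:rho0}. The segment area is computed as a sector minus an isosceles triangle with legs $r$. Then I would differentiate in $\varrho$ under the constraint $a+b=w$.

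The main obstacle is the sign of this derivative, since the two terms compete. As a fallback I would use a first-variation argument instead. Move $q$ outward along $[p,q]$ and $t$ inward along $[p,t]$ at unit speed. Each vertex motion sweeps area at a rate given by the component of its velocity normal to the adjacent sides times the side length. The gain at $q$, which is a spike vertex with two long sides, should dominate the loss at $t$, whose adjacent sides meet at an angle close to $\pi$. The hypothesis $w\le r$ would enter here to control the sagitta, and hence the angles of the arcs at $q$ and $t$. Strict monotonicity then yields $A(\varrho)>A(\varrho_0)=\area(T_{w,r})$ for every $\varrho\in(\varrho_0,\frac w2)$.
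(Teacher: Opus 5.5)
\textbf{The gap.} The decisive step, the sign of $\frac{d}{d\varrho}\area P$, is never proved. You flag it yourself as the main obstacle, so as it stands the proposal does not prove the lemma.

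The reduction itself is a legitimate route, different from the paper's. The paper never varies $\varrho$. It compares the two small regions of $T_{w,r}\setminus Q_{w,r,\varrho}$ near $v_1$ and of $Q_{w,r,\varrho}\setminus T_{w,r}$ near $m_2$ directly, using Lemma~\ref{sublem:minimaldistancebetweencircles} and the reflection in the common chord of two intersecting $r$-circles.

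In $\R^2$ you stop at $S'(c)<\frac{\sqrt3}{6}c$. This inequality is true, but it must be checked. With $\theta$ the central angle of the arc, $S'(c)=\frac c2\tan\frac\theta2=\frac{c^2}{2\sqrt{4r^2-c^2}}$. So the inequality is exactly $c<r$, which holds because $c^2=w^2-3\varrho(w-\varrho)<w^2\le r^2$. In $\hyp^2$ and $\sph^2$ nothing is computed, and with angle-excess and sector-minus-triangle formulas the sign is not evident.

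\textbf{The fallback points the wrong way.} Since $d(p,q)=w-\varrho$ and $d(p,t)=\varrho$, moving $q$ outward and $t$ inward \emph{decreases} $\varrho$. If the gain at $q$ dominated, $A$ would be decreasing and you would get $A(\varrho)<\area(T_{w,r})$.

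The same reversal is in your boundary-structure heuristic. For $\varrho>\varrho_0$ the points $t_j$ move away from $p$. They are genuine vertices because $t_j\notin[q_1,q_2,q_3]_r$: that triangle lies in the copy of $T_{w,r}$ with incenter $p$ and vertices on the rays $pq_i$, whose boundary meets the ray $pt_j$ at distance $\varrho_0<\varrho$.

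In fact the blunt vertex $t$ wins. Both rates carry the same factor coming from the common side $\arc{qt}$, and what differs is the sine of the half-angle. Precisely, let $\phi_q,\phi_t$ be the angles of $P$ at $q,t$, let $\beta$ be the angle between $[q,t]$ and the arc, and let $\tau(x)=x,\ \tan x,\ \tanh x$ in $\R^2,\sph^2,\hyp^2$ respectively. Jacobi fields along $[q,t]$ give $\partial_{d(p,q)}\area[p,q,t]=\tau(\frac c2)\sin\angle pqt$ and $\partial_{d(p,q)}c=\cos\angle pqt$. A short computation (first variation of the length of a constant-curvature arc) gives $S'(c)=\tau(\frac c2)\tan\beta$. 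Hence $\partial_{d(p,q)}\area P=\frac{\tau(c/2)}{\cos\beta}\sin\phi_q$, and symmetrically at $t$, so
\[
\frac{d}{d\varrho}\area P=\frac{\tau(c/2)}{\cos\beta}\bigl(\sin\phi_t-\sin\phi_q\bigr).
\]
As $d(p,q)>d(p,t)$, we have $\angle ptq>\angle pqt$, hence $\phi_q<\phi_t$. Also $\phi_t\le\frac\pi2$, since $2\phi_t$ is the interior angle of the $r$-disk hexagon at $t$ (equal to $\pi$ at $\varrho_0$). So the derivative is positive in all three planes. With this step supplied, your approach gives a complete proof, with an explicit derivative in place of the paper's synthetic comparison.
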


\begin{proof}
As $T_{w,r}=Q_{w,r,\varrho_0}$ we can construct $T_{w,r}$ in a convenient way centered at the point $p$.  Let $v_1$ be the vertex of $T_{w,r}$ that is collinear with the points $p$ and $q_1$ and let $m_2 \in \partial B_0 \cap \partial T_{w,r}$ be the point that is collinear with $p$ and $t_2$, where $B_0$ is the incircle of $T_{w,r}$. Let $\alpha = d(m_2,t_2)$ and notice that $d(v_1,q_1) = d(p,v_1)-d(p,q_1) = (w-\varrho_0) - (w-\varrho) = \varrho - \varrho_0 = d(m_2,t_2)$. Let $f$ be the intersection point of $\arc{v_1m_2}$ and $\arc{q_1t_2}$. Due to the symmetry and shared area of $T_{w,r}$ and $Q_{w,r,\varrho}$, we only need to compare the area of the domain bounded by $\arc{q_1f}$, $\arc{fv_1}$ and $[v_1,q_1]$ and the area of the domain bounded by $\arc{t_2f}$, $\arc{fm_2}$ and $[m_2,t_2]$; see Figure~\ref{fig:Twr-and-Qwrrho}.

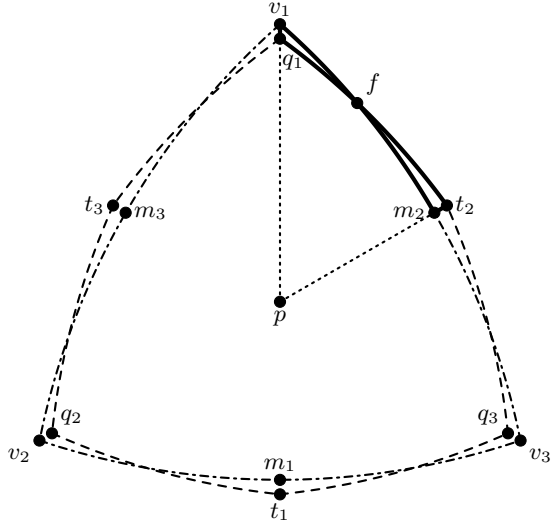
\begin{figure}[h]
\centering
\begin{tikzpicture}[line cap=round,line join=round,>=triangle 45,x=1cm,y=1cm,scale=3.5]
\clip(-1.1,-0.9) rectangle (1.1,1.2);
\coordinate (p) at (0,0);
\coordinate (v1) at (0,1.0550685001861666);
\coordinate (v2) at (-0.9137161238939671,-0.5275342500930831);
\coordinate (v3) at (0.9137161238939671,-0.5275342500930831);
\coordinate (q1) at (0.05,0.97);
\coordinate (q2) at (-0.8660254037844387,-0.5);
\coordinate (q3) at (0.8660254037844387,-0.5);
\coordinate (m1) at (0,-0.6769823073827106);
\coordinate (m2) at (0.5862838761060329,0.33849115369135513);
\coordinate (m3) at (-0.5862838761060329,0.33849115369135513);
\coordinate (t1) at (0,-0.7320508075688772);
\coordinate (t2) at (0.6339745962155613,0.36602540378443843);
\coordinate (t3) at (-0.6339745962155613,0.36602540378443843);
\coordinate (f) at (0.29328693802568606,0.7554289505400491);
\clip(-1.723715513261319,-1) rectangle (1.2796698843114531,1.1384428813092446);
\draw [shift={(1.8974127181395855,-1.0954717435817094)},line width=0.8pt,dashdotted]  plot[domain=2.2937435756730413:2.942244180309947,variable=\t]({1*2.8679257945461294*cos(\t r)+0*2.8679257945461294*sin(\t r)},{0*2.8679257945461294*cos(\t r)+1*2.8679257945461294*sin(\t r)});
\draw [shift={(0,2.190943487163417)},line width=0.8pt,dashdotted]  plot[domain=4.388138678066237:5.036639282703142,variable=\t]({1*2.8679257945461276*cos(\t r)+0*2.8679257945461276*sin(\t r)},{0*2.8679257945461276*cos(\t r)+1*2.8679257945461276*sin(\t r)});
\draw [shift={(0.3001370256235798,2.120126628859801)},line width=0.8pt,dashed]  plot[domain=4.29363517904471:4.60754400612637,variable=\t]({1*2.8679257945461276*cos(\t r)+0*2.8679257945461276*sin(\t r)},{0*2.8679257945461276*cos(\t r)+1*2.8679257945461276*sin(\t r)});
\draw [shift={(-0.3001370256235795,2.120126628859801)},line width=0.8pt,dashed]  plot[domain=4.817233954643008:5.1311427817246695,variable=\t]({1*2.8679257945461276*cos(\t r)+0*2.8679257945461276*sin(\t r)},{0*2.8679257945461276*cos(\t r)+1*2.8679257945461276*sin(\t r)});
\draw [shift={(1.986152032644239,-0.8001370256235804)},line width=0.8pt,dashed]  plot[domain=2.722838852249813:3.036747679331474,variable=\t]({1*2.8679257945461267*cos(\t r)+0*2.8679257945461267*sin(\t r)},{0*2.8679257945461267*cos(\t r)+1*2.8679257945461267*sin(\t r)});
\draw [shift={(1.6860150070206592,-1.3199896032362222)},line width=0.8pt,dashed]  plot[domain=2.199240076651514:2.5131489037331747,variable=\t]({1*2.8679257945461276*cos(\t r)+0*2.8679257945461276*sin(\t r)},{0*2.8679257945461276*cos(\t r)+1*2.8679257945461276*sin(\t r)});
\draw [shift={(-1.98615203264424,-0.8001370256235789)},line width=0.8pt,dashed]  plot[domain=0.1048449742583187:0.41670518836069265,variable=\t]({1*2.867925794546127*cos(\t r)+0*2.867925794546127*sin(\t r)},{0*2.867925794546127*cos(\t r)+1*2.867925794546127*sin(\t r)});
\draw [shift={(-1.6860150070206603,-1.319989603236221)},line width=1.6pt]  plot[domain=0.6284437498566177:0.809098591459293,variable=\t]({1*2.867925794546127*cos(\t r)+0*2.867925794546127*sin(\t r)},{0*2.867925794546127*cos(\t r)+1*2.867925794546127*sin(\t r)});
\draw [shift={(-1.6860150070206603,-1.319989603236221)},line width=1.6pt]  plot[domain=0.809098591459293:0.9423525769382786,variable=\t]({1*2.867925794546127*cos(\t r)+0*2.867925794546127*sin(\t r)},{0*2.867925794546127*cos(\t r)+1*2.867925794546127*sin(\t r)});
\draw [shift={(-1.8974127181395817,-1.095471743581706)},line width=0.8pt,dashdotted]  plot[domain=0.1993484732798451:0.5235987755982985,variable=\t]({1*2.867925794546124*cos(\t r)+0*2.867925794546124*sin(\t r)},{0*2.867925794546124*cos(\t r)+1*2.867925794546124*sin(\t r)});
\draw [shift={(-1.8974127181395817,-1.095471743581706)},line width=1.6pt]  plot[domain=0.5235987755982985:0.7015200780642291,variable=\t]({1*2.867925794546124*cos(\t r)+0*2.867925794546124*sin(\t r)},{0*2.867925794546124*cos(\t r)+1*2.867925794546124*sin(\t r)});
\draw [shift={(-1.8974127181395817,-1.095471743581706)},line width=1.6pt]  plot[domain=0.7015200780642291:0.847849077916752,variable=\t]({1*2.867925794546124*cos(\t r)+0*2.867925794546124*sin(\t r)},{0*2.867925794546124*cos(\t r)+1*2.867925794546124*sin(\t r)});
\draw [line width=0.8pt,dotted] (0,0)-- (0,1);
\draw [line width=1.6pt] (0,1)-- (0,1.0550685001861666);
\draw [line width=0.8pt,dotted] (0,0)-- (0.5862838761060329,0.33849115369135513);
\draw [line width=1.6pt] (0.5862838761060329,0.33849115369135513)-- (0.6339745962155613,0.36602540378443843);
\begin{scriptsize}
\draw [fill=black] (0,0) circle (0.6pt);
\draw [fill=black] (0,1) circle (0.6pt);
\draw [fill=black] (-0.8660254037844387,-0.5) circle (0.6pt);
\draw [fill=black] (0.8660254037844384,-0.5) circle (0.6pt);
\draw [fill=black] (0,1.0550685001861666) circle (0.6pt);
\draw [fill=black] (0,-0.7320508075688772) circle (0.6pt);
\draw [fill=black] (0,-0.6769823073827106) circle (0.6pt);
\draw [fill=black] (-0.9137161238939671,-0.5275342500930831) circle (0.6pt);
\draw [fill=black] (0.9137161238939666,-0.5275342500930836) circle (0.6pt);
\draw [fill=black] (0.5862838761060329,0.33849115369135513) circle (0.6pt);
\draw [fill=black] (-0.5862838761060327,0.3384911536913556) circle (0.6pt);
\draw [fill=black] (0.6339745962155613,0.36602540378443843) circle (0.6pt);
\draw [fill=black] (-0.633974596215561,0.3660254037844389) circle (0.6pt);
\draw [fill=black] (0.29328693802568606,0.7554289505400491) circle (0.6pt);
\node[below] at (p) {$p$};
\node[above] at (v1) {$v_1$};
\node[below left] at (v2) {$v_2$};
\node[below right] at (v3) {$v_3$};
\node[above] at (m1) {$m_1$};
\node[left] at (m2) {$m_2$};
\node[right] at (m3) {$m_3$};
\node[below] at (q1) {$q_1$};
\node[above right] at (q2) {$q_2$};
\node[above left] at (q3) {$q_3$};
\node[below] at (t1) {$t_1$};
\node[right] at (t2) {$t_2$};
\node[left] at (t3) {$t_3$};
\node[above right] at (f) {$f$};
\end{scriptsize}
\end{tikzpicture}
\caption{The $r$-disk triangle $T_{w,r}$ and the $r$-disk hexagon $Q_{w,r,\varrho}$}
\label{fig:Twr-and-Qwrrho}
\end{figure}

%In order to make this comparison, we use the following sublemma.

%\begin{sublemma}\label{sublem:minimaldistancebetweencircles}
%Let $L$ be the line through the centres of $B(s_1,r)$ and $B(s_2,r)$, and let $v=L \cap \partial B(s_1,r)$ and $u=L \cap \partial B(s_2,r)$ such that $v$ is not in $B(s_2,r)$ and $u$ is not in $B(s_1,r)$, and, without loss of generality, choose $f \in \partial B(s_1,r) \cap \partial B(s_2,r)$ to be one of the two points in the intersection. Additionally, let $x \in \arc{fv}$ such that $x$ moves along the arc from $f$ to $v$,  then the closest point on $B(s_2,r)$ is the point $y \in B(s_2,r)$ such that $y=[s_2,x] \cap B(s_2,r)$ and $d(x,y)$ is a strictly increasing function of the length of $\arc{fx}$.
%\end{sublemma}

%\begin{proof}
%Let $x_1,x_2 \in \arc{fv}$ such that $\arc{fx_1}$ is shorter than $\arc{fx_2}$, then $\angle fs_1x_1 < \angle fs_1x_2$, which gives $\angle s_2s_1x_1 = \angle s_2s_1f + \angle fs_1x_1 <  \angle s_2s_1f + \angle fs_1x_2 = \angle s_2s_1x_2$ which implies that $d(s_2,x_1) < d(s_2,x_2)$, finally, since $[s_2,x_i]$ is orthogonal to $B(s_2,r)$, it is clear that the closest points on $B(s_2,r)$ to $x_1$ and $x_2$ are $y_1=[s_2,x_1] \cap B(s_2,r)$ and $y_2=[s_2,x_2] \cap B(s_2,r)$.  Then, as $d(s_2,y_1) = d(s_2,y_2) = r$, and since $d(s_2,x_1) < d(s_2,x_2)$, $d(x_1,y_1) < d(x_2,y_2)$.
%\end{proof}

Since $[p,m_2]$ is orthogonal to $\arc{v_1m_2}$, $\alpha$ is the shortest distance between $\arc{fm_2}$ and $\arc{ft_2}$ at $t_2$.  If $[p,q_1]$ is orthogonal to $\arc{fq_1}$, the shortest distance between $\arc{fq_1}$ and $\arc{fv_1}$ at $v_1$ is also $\alpha$ and thus the area of the two sets are equal as, by Lemma \ref{sublem:minimaldistancebetweencircles},  $\ell(\arc{fq_1})=\ell(\arc{fm_2})$ and $\ell(\arc{fv_1})=\ell(\arc{ft_2})$.  However, if $[p,q_1]$ is orthogonal to $\arc{fq_1}$, then there exists a point $c_1$ collinear with $[p,q_1]$ such that $c_1$ is the center of the circle that contains $\arc{q_1t_2}$, so $d(c_1,q_1)=r$, and $d(c_1,p)=r-w+\varrho$.  Then $d(c_1,p)+d(p,t_2)=r-w+\varrho+\varrho < r=d(c_1,t_2)$, as $t_2$ is also a point on $B(c_1,r)$, but this violates the triangle inequality, and thus, $[p,q_1]$ is not orthogonal to $\arc{fq_1}$.
%for $\varrho \in (\varrho_0,\frac{w}{2})$ the area of $Q_{w,r,\varrho}$ is not equal to that of $T_{w,r}$.

Since $[p,q_1]$ is not orthogonal to $\arc{fq_1}$, then $\alpha$ is greater than the shortest distance between  $\arc{fv_1}$ and $\arc{fq_1}$ at the point $v_1$, and therefore, by the monotonicity of the function presented in Lemma~\ref{sublem:minimaldistancebetweencircles}, $\ell(\arc{fv_1}) < \ell(\arc{ft_2})$.  It remains to show that $\ell(\arc{fq_1}) < \ell(\arc{fm_2})$.  Assume otherwise, if $\ell(\arc{fq_1}) \geq \ell(\arc{fm_2})$, then there exists a point $q_1^- \in \arc{fq_1}$ such that $\ell(\arc{fq_1^-})= \ell(\arc{fm_2})$.  Likewise, there exists a point $v_1^+$ such that $\arc{fv_1^+}$ contains $\arc{fv_1}$ such that $\ell(\arc{fv_1^+}) = \ell(\arc{ft_2})$, and therefore $d(v_1^+,q_1^-)=\alpha$ is the shortest distance between $\arc{fq_1^-}$ and $\arc{fv_1^+}$ at $v_1^+$.  

Let $c_2$ be the center of the circle that contains $\arc{fv_1^+}$, and let $L$ be the line that passes through $c_2$ and $q_1$. Since $\alpha$ is not the shortest distance at $v_1$, we can find the point $b=\arc{fv_1^+}\cap L$. By construction, $d(b,q_1)$ is the shortest distance between $\arc{fq_1}$ and $\arc{fv_1^+}$ at $b$.  It is easy to see due to the lengths of the respective arcs that $[b,q_1]$ intersects $[v_1^+,q_1^-]$.  However, this leads to a contradiction, as both $d(b,q_1)$ and $d(v_1^+,q_1^-)$ are the shortest distance between the arcs $\arc{fq_1}$ and $\arc{fv_1^+}$ at $b$ and $v_1^+$, respectively.%  As $[c_2,v_1^+]$ contains $[v_1^+,q_1^-]$ and $[c_2,b]$ contains $[b,q_1]$ so $[c_2,b]$ and $[c_2,v_1^+]$ intersect twice, which is impossible.
Therefore, $\ell(\arc{fq_1}) < \ell(\arc{fm_2})$, and due to the reflective symmetry of two intersecting circles with identical radius through the line that passes through the two points of intersection of the boundaries of said circles, we can find points $v_1' \in \arc{ft_2}$ and $q_1' \in \arc{fm_2}$, such that $\ell(\arc{fv_1'}) = \ell(\arc{fv_1})$ and $\ell(\arc{fq_1'}) = \ell(\arc{fq_1})$ (see Figure~\ref{fig:reflection}). Then, the region bounded by $\arc{v_1'f}$, $\arc{fq_1'}$ and $[q_1',v_1']$ is a proper subset of the region bounded by $\arc{t_2f}$, $\arc{fm_2}$ and $[m_2,t_2]$%It should be easy to see that the area bounded by $\arc{fx}$, $\arc{fy}$, and $[x,y]$ is contained in the area bounded by $\arc{fm_2}$, $\arc{ft_2}$, and $[m_2,t_2]$
, and thus the area of $T_{w,r} \setminus Q_{w,r,\varrho}$ is less than that of $Q_{w,r,\varrho} \setminus T_{w,r}$, and therefore the area of $Q_{w,r,\varrho}$ is greater than that of $T_{w,r}$.

\begin{figure}[H]
\centering
\begin{tikzpicture}[line cap=round,line join=round,>=triangle 45,x=1cm,y=1cm,scale=2]
\clip(-1.21,-1.1) rectangle (1.21,1.2);
\coordinate (f) at (0.1,0.9792237906882774);
\coordinate (c2) at (-0.2012686136287686,0);
\coordinate (c12) at (0.2012686136287686,0);
\coordinate (v1) at (-0.5537262719083826,0.9358277614604361);
\coordinate (q1) at (-0.3004268259698176,0.863281154818844);
\coordinate (v1') at (0.5537262719083826,0.9358277614604361);
\coordinate (q1') at (0.3004268259698176,0.863281154818844);
\coordinate (m2) at (0.5411562225186984,0.6699293714052301);
\coordinate (t2) at (0.7367730079530657,0.8464447891576709);
\draw [line width=0.8pt, dashed] (0.001513949107973231,-1.1)--(0.001513949107973231,1.2);
\draw [line width=2pt] (0.5411562225186984,0.6699293714052301)-- (0.7367730079530657,0.8464447891576709);
\draw [line width=0.8pt,dotted] (-0.2012686136287686,0)-- (0.5411562225186984,0.6699293714052301);
\draw [line width=1.6pt] (-0.5537262719083826,0.9358277614604361)-- (-0.3004268259698176,0.863281154818844);
\draw [line width=2pt] (0.5537262719083828,0.935827761460436)-- (0.30042682596981773,0.863281154818844);
\draw [shift={(0.2012686136287686,0)},line width=2pt]  plot[domain=1.7720282133426348:2.0972398622433923,variable=\t]({1*0.9993903933137115*cos(\t r)+0*0.9993903933137115*sin(\t r)},{0*0.9993903933137115*cos(\t r)+1*0.9993903933137115*sin(\t r)});
\draw [shift={(0.2012686136287686,0)},line width=0.8pt]  plot[domain=-4.185945444936194:1.0067138651545406,variable=\t,samples=100]({1*0.9984751706373953*cos(\t r)+0*0.9984751706373953*sin(\t r)},{0*0.9984751706373953*cos(\t r)+1*0.9984751706373953*sin(\t r)});
\draw [shift={(0.2012686136287686,0)},line width=2pt]  plot[domain=1.0067138651545406:1.2106003310748068,variable=\t]({1*1.0016155636933795*cos(\t r)+0*1.0016155636933795*sin(\t r)},{0*1.0016155636933795*cos(\t r)+1*1.0016155636933795*sin(\t r)});
\draw [shift={(0.2012686136287686,0)},line width=2pt]  plot[domain=1.2106003310748068:1.7720282133426348,variable=\t]({1*1*cos(\t r)+0*1*sin(\t r)},{0*1*cos(\t r)+1*1*sin(\t r)});
\draw [shift={(-0.2012686136287686,0)},line width=2pt]  plot[domain=0.7341136510527686:1.0443527913464006,variable=\t]({1*1*cos(\t r)+0*1*sin(\t r)},{0*1*cos(\t r)+1*1*sin(\t r)});
\draw [shift={(-0.2012686136287686,0)},line width=2pt]  plot[domain=1.0443527913464006:1.3665976373458306,variable=\t]({1*0.9984751706373953*cos(\t r)+0*0.9984751706373953*sin(\t r)},{0*0.9984751706373953*cos(\t r)+1*0.9984751706373953*sin(\t r)});
\draw [shift={(-0.2012686136287686,0)},line width=2pt]  plot[domain=1.3665976373458306:1.9309923225149863,variable=\t]({1*1*cos(\t r)+0*1*sin(\t r)},{0*1*cos(\t r)+1*1*sin(\t r)});
\draw [shift={(-0.2012686136287686,0)},line width=0.8pt]  plot[domain=-4.3521929846646:0.7341136510527686,variable=\t,samples=100]({1*1*cos(\t r)+0*1*sin(\t r)},{0*1*cos(\t r)+1*1*sin(\t r)});
\begin{scriptsize}
\draw [fill=black] (-0.2012686136287686,0) circle (1pt);
\draw [fill=black] (0.001513949107973231,0.9792237906882774) circle (1pt);
\draw [fill=black] (0.5411562225186984,0.6699293714052301) circle (1pt);
\draw [fill=black] (0.7367730079530657,0.8464447891576709) circle (1pt);
\draw [fill=black] (0.2012686136287686,0) circle (1pt);
\draw [fill=black] (-0.3004268259698176,0.863281154818844) circle (1pt);
\draw [fill=black] (-0.5537262719083826,0.9358277614604361) circle (1pt);
\draw [fill=black] (0.5537262719083828,0.935827761460436) circle (1pt);
\draw [fill=black] (0.30042682596981773,0.863281154818844) circle (1pt);
\node[above] at (f) {$f$};
\node[below left] at (c2) {$c_2$};
\node[below right] at (c12) {$c_{1,2}$};
\node[above left] at (v1) {$v_1$};
\node[below right] at (q1) {$q_1$};
\node[above] at (v1') {$v_1'$};
\node[below left] at (q1') {$q_1'$};
\node[right] at (m2) {$m_2$};
\node[above right] at (t2) {$t_2$};
\end{scriptsize}
\end{tikzpicture}
\caption{}
\label{fig:reflection}
\end{figure}

\end{proof}

Next, we want to prove the monotonicity of the area of $T_{w,r}$ in $r$. %For that, we need the following area formula for triangles. For the proof of the spherical version, visit the PhD thesis of Sagmeister \cite{phd}, while the proof of the hyperbolic formula can be found in the PhD thesis of Frenkel \cite{Fre18}.

%\begin{lemma}\label{lemma:triangle-area}
%Let $T\subset\M^2$ a triangle of side lengths $a,b,c$ and opposite angles $\alpha,\beta,\gamma$. Then for the area of $T$ we have the following equations depending on $\M^2$:
%$$\cot\left(\frac{\area\left(T\right)}{2}\right)=\frac{\cot\frac{a}{2}\cot\frac{b}{2}+\cos\gamma}{\sin\gamma}\text{ if }\M^2=\sph^2,$$
%$$\cot\left(\frac{\area\left(T\right)}{2}\right)=\frac{\coth\frac{a}{2}\coth\frac{b}{2}-\cos\gamma}{\sin\gamma}\text{ if }\M^2=\hyp^2.$$
%\end{lemma}

\begin{lemma}\label{lem:areaoftrianglesdecreasesinr}
Let $0<w\leq r$, and also assume $r<\frac{\pi}{2}$ if $\M^2=\sph^2$. Then $\area(T_{w,r})$ decreases in $r$.
\end{lemma}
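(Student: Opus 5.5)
We plan to compare $T_{w,r}$ and $T_{w,r'}$ for $w\le r<r'$ directly, by the same comparison principle already used in Lemma~\ref{lem:areaofhexagonsincreases}, rather than by an explicit area formula. Both triangles have the same minimal width $w$, so each is the union of its incircle and three congruent caps, with vertices at distance $w-\varrho_0$ from the incenter. By Corollary~\ref{cor:rho0:monotone}, $\varrho_0(r',w)<\varrho_0(r,w)$.

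We place both triangles with a common incenter $p$ and the same three symmetry axes. The vertices $v_i$ of $T_{w,r}$ and $v_i'$ of $T_{w,r'}$ then lie on the same rays from $p$. Their distances satisfy $d(p,v_i')=w-\varrho_0(r',w)>w-\varrho_0(r,w)=d(p,v_i)$. On the opposite rays, the midpoints of the sides satisfy $d(p,m_i')=\varrho_0(r',w)<\varrho_0(r,w)=d(p,m_i)$. So $T_{w,r'}$ sticks out near the vertices and is pushed in near the side midpoints. The two boundaries should cross exactly once on each of the six symmetric half-sides, at a point $f$. This needs checking: two circular arcs of radii $r$ and $r'$ meet in at most two points, and by symmetry of each arc about its axis through $m_i$, the crossings come in symmetric pairs, one on each half.

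By the sixfold symmetry, it then suffices to compare two regions on one half-side $\arc{v_1 m_2}$. The first is the region $A$ between the arcs from $f$ to the vertices, where $T_{w,r'}\setminus T_{w,r}$ lies. The second is the region $A'$ between the arcs from $f$ to the midpoints, where $T_{w,r}\setminus T_{w,r'}$ lies. We must show $\area(A')>\area(A)$.

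The key normalization is that the radial gaps are equal: $d(v_1,v_1')=d(m_2',m_2)=\varrho_0(r,w)-\varrho_0(r',w)$. This is the identical situation to Lemma~\ref{lem:areaofhexagonsincreases}, where $d(v_1,q_1)=d(m_2,t_2)$. We will therefore mimic that argument with three steps.

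First, at $m_2$ the segment to $p$ is orthogonal to the $r$-arc and nearly orthogonal to the $r'$-arc, so the gap there is essentially the shortest distance between the arcs. At the vertex, the segment $[p,v_1]$ is far from orthogonal to the arcs, so the shortest distance between the arcs near the vertex end is strictly smaller than the common gap.

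Second, using Lemma~\ref{sublem:minimaldistancebetweencircles} (or its analogue for circles of radii $r,r'$), the distance from one arc to the other grows strictly with arc length from $f$. Consequently the arcs bounding $A$ are shorter than those bounding $A'$.

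Third, we transport $A$ into $A'$ by an isometry that fixes $f$ and matches the arcs, as with the reflection trick there, and conclude $\area(A)<\area(A')$ strictly.

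The main obstacle is that the reflection step in Lemma~\ref{lem:areaofhexagonsincreases} relied on both arcs having the same radius $r$. Here the radii differ, so no reflection swaps the two arcs. If the transport argument breaks down, we fall back on an analytic route: fix $w$ and compute $\frac{d}{dr}\area(T_{w,r})$.

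For that computation we write $\area(T_{w,r})=6\,\area(\text{kite } p\,m_2\,v_1)$, with one side of the kite replaced by the $r$-arc. Each piece is a geodesic triangle $[c,p,v]$ plus or minus a circular sector. We would compute these using the triangle-area formulas in the three geometries (angle excess on $\sph^2$, defect on $\hyp^2$). The side lengths are $r$, $r-\varrho_0$ and $w-\varrho_0$, with angle $2\pi/3$ at $p$, and $\partial\varrho_0/\partial r$ is already available from Corollary~\ref{cor:rho0:monotone}. The remaining task would be to verify that the resulting derivative is negative. I expect this sign verification, especially on $\sph^2$ and $\hyp^2$, to be the real difficulty, so I would try the geometric comparison first.
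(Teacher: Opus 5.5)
Your proposal has a genuine gap, and it is the one you flag yourself: the transport step cannot work as set up. In your region $A$ (near $v_1$) the outer boundary arc has radius $r'$ and the inner arc has radius $r$. In $A'$ (near $m_2$) the outer arc has radius $r$ and the inner arc has radius $r'$. The two radii swap roles between the regions, so no isometry fixing $f$ carries the arcs bounding $A$ onto those bounding $A'$. The step ``shorter arcs, hence the reflected region is a proper subset'' from Lemma~\ref{lem:areaofhexagonsincreases} therefore has nothing to rest on.

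The analytic fallback does not close the gap either. The sign of $\frac{d}{dr}\area(T_{w,r})$ is exactly the content of the lemma, and you leave it unverified in all three geometries. As written, nothing establishes $\area(T_{w,r})>\area(T_{w,r'})$.

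The missing idea is an intermediate body that separates the change of radius from the change of inradius. Let $Q$ be the $r'$-ball convex hull of the three vertices and the three side midpoints of $T_{w,r}$. In the paper's notation this is $Q_{w,r',\varrho}$ with $\varrho=\varrho_0(r,w)$.

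\emph{Inclusion.} Since $r<r'$ (and $r'<\frac{\pi}{2}$ on $\sph^2$), every disk of radius $r$ is an intersection of disks of radius $r'$. Hence $T_{w,r}$ is $r'$-ball convex and contains $Q$. The containment is proper, because the sides of $Q$ are the flatter $r'$-arcs through the same six points.

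\emph{Same-radius comparison.} $Q$ and $T_{w,r'}$ are built with the same radius $r'$. By Corollary~\ref{cor:rho0:monotone}, $\varrho_0(r',w)<\varrho<\frac{w}{2}$. So Lemma~\ref{lem:areaofhexagonsincreases}, applied with $r'$ in place of $r$, gives $\area(Q)>\area(T_{w,r'})$.

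Combining, $\area(T_{w,r})>\area(Q)>\area(T_{w,r'})$. This is the paper's argument. The equal-radius reflection trick is only ever used between $Q$ and $T_{w,r'}$, and the mixed-radius comparison that blocks your approach is replaced by a simple inclusion.
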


\begin{proof}

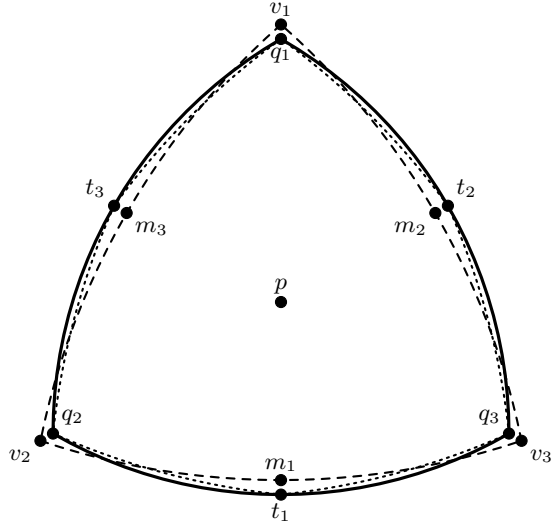
\begin{figure}[h]
\centering
\begin{tikzpicture}[line cap=round,line join=round,>=triangle 45,x=1cm,y=1cm,scale=3.5]
\clip(-1.1,-0.9) rectangle (1.1,1.2);
\coordinate (p) at (0,0);
\coordinate (v1) at (0,1.0550685001861666);
\coordinate (v2) at (-0.9137161238939671,-0.5275342500930831);
\coordinate (v3) at (0.9137161238939671,-0.5275342500930831);
\coordinate (q1) at (0,1);
\coordinate (q2) at (-0.8660254037844387,-0.5);
\coordinate (q3) at (0.8660254037844387,-0.5);
\coordinate (m1) at (0,-0.6769823073827106);
\coordinate (m2) at (0.5862838761060329,0.33849115369135513);
\coordinate (m3) at (-0.5862838761060329,0.33849115369135513);
\coordinate (t1) at (0,-0.7320508075688772);
\coordinate (t2) at (0.6339745962155613,0.36602540378443843);
\coordinate (t3) at (-0.6339745962155613,0.36602540378443843);
\draw [shift={(0,1)},line width=1.2pt]  plot[domain=4.1887902047863905:5.235987755982988,variable=\t]({1*1.7320508075688772*cos(\t r)+0*1.7320508075688772*sin(\t r)},{0*1.7320508075688772*cos(\t r)+1*1.7320508075688772*sin(\t r)});
\draw [shift={(-0.8660254037844387,-0.5)},line width=1.2pt]  plot[domain=0:1.0471975511965976,variable=\t]({1*1.7320508075688772*cos(\t r)+0*1.7320508075688772*sin(\t r)},{0*1.7320508075688772*cos(\t r)+1*1.7320508075688772*sin(\t r)});
\draw [shift={(0.8660254037844384,-0.5)},line width=1.2pt]  plot[domain=2.0943951023931953:3.141592653589793,variable=\t]({1*1.7320508075688776*cos(\t r)+0*1.7320508075688776*sin(\t r)},{0*1.7320508075688776*cos(\t r)+1*1.7320508075688776*sin(\t r)});
\draw [dashed,shift={(-1.8974127181395817,-1.095471743581706)},line width=0.8pt]  plot[domain=0.1993484732798451:0.8478490779167521,variable=\t]({1*2.867925794546124*cos(\t r)+0*2.867925794546124*sin(\t r)},{0*2.867925794546124*cos(\t r)+1*2.867925794546124*sin(\t r)});
\draw [dashed,shift={(1.8974127181395855,-1.0954717435817094)},line width=0.8pt]  plot[domain=2.2937435756730413:2.942244180309947,variable=\t]({1*2.8679257945461294*cos(\t r)+0*2.8679257945461294*sin(\t r)},{0*2.8679257945461294*cos(\t r)+1*2.8679257945461294*sin(\t r)});
\draw [dashed,shift={(0,2.190943487163417)},line width=0.8pt]  plot[domain=4.388138678066237:5.036639282703142,variable=\t]({1*2.8679257945461276*cos(\t r)+0*2.8679257945461276*sin(\t r)},{0*2.8679257945461276*cos(\t r)+1*2.8679257945461276*sin(\t r)});
\draw [dotted,shift={(0.3001370256235798,2.120126628859801)},line width=0.8pt]  plot[domain=4.29363517904471:4.60754400612637,variable=\t]({1*2.8679257945461276*cos(\t r)+0*2.8679257945461276*sin(\t r)},{0*2.8679257945461276*cos(\t r)+1*2.8679257945461276*sin(\t r)});
\draw [dotted,shift={(-0.3001370256235795,2.120126628859801)},line width=0.8pt]  plot[domain=4.817233954643008:5.1311427817246695,variable=\t]({1*2.8679257945461276*cos(\t r)+0*2.8679257945461276*sin(\t r)},{0*2.8679257945461276*cos(\t r)+1*2.8679257945461276*sin(\t r)});
\draw [dotted,shift={(1.986152032644239,-0.8001370256235804)},line width=0.8pt]  plot[domain=2.722838852249813:3.036747679331474,variable=\t]({1*2.8679257945461267*cos(\t r)+0*2.8679257945461267*sin(\t r)},{0*2.8679257945461267*cos(\t r)+1*2.8679257945461267*sin(\t r)});
\draw [dotted,shift={(1.6860150070206592,-1.3199896032362222)},line width=0.8pt]  plot[domain=2.199240076651514:2.5131489037331747,variable=\t]({1*2.8679257945461276*cos(\t r)+0*2.8679257945461276*sin(\t r)},{0*2.8679257945461276*cos(\t r)+1*2.8679257945461276*sin(\t r)});
\draw [dotted,shift={(-1.6860150070206603,-1.319989603236221)},line width=0.8pt]  plot[domain=0.6284437498566177:0.9423525769382786,variable=\t]({1*2.867925794546127*cos(\t r)+0*2.867925794546127*sin(\t r)},{0*2.867925794546127*cos(\t r)+1*2.867925794546127*sin(\t r)});
\draw [dotted,shift={(-1.98615203264424,-0.8001370256235789)},line width=0.8pt]  plot[domain=0.1048449742583187:0.41670518836069265,variable=\t]({1*2.867925794546127*cos(\t r)+0*2.867925794546127*sin(\t r)},{0*2.867925794546127*cos(\t r)+1*2.867925794546127*sin(\t r)});
\begin{scriptsize}
\draw [fill=black] (0,0) circle (0.6pt);
\draw [fill=black] (0,1) circle (0.6pt);
\draw [fill=black] (-0.8660254037844387,-0.5) circle (0.6pt);
\draw [fill=black] (0.8660254037844384,-0.5) circle (0.6pt);
\draw [fill=black] (0,1.0550685001861666) circle (0.6pt);
\draw [fill=black] (0,-0.7320508075688772) circle (0.6pt);
\draw [fill=black] (0,-0.6769823073827106) circle (0.6pt);
\draw [fill=black] (-0.9137161238939671,-0.5275342500930831) circle (0.6pt);
\draw [fill=black] (0.9137161238939666,-0.5275342500930836) circle (0.6pt);
\draw [fill=black] (0.5862838761060329,0.33849115369135513) circle (0.6pt);
\draw [fill=black] (-0.5862838761060327,0.3384911536913556) circle (0.6pt);
\draw [fill=black] (0.6339745962155613,0.36602540378443843) circle (0.6pt);
\draw [fill=black] (-0.633974596215561,0.3660254037844389) circle (0.6pt);
\node[above] at (p) {$p$};
\node[above] at (v1) {$v_1$};
\node[below left] at (v2) {$v_2$};
\node[below right] at (v3) {$v_3$};
\node[above] at (m1) {$m_1$};
\node[below left] at (m2) {$m_2$};
\node[below right] at (m3) {$m_3$};
\node[below] at (q1) {$q_1$};
\node[above right] at (q2) {$q_2$};
\node[above left] at (q3) {$q_3$};
\node[below] at (t1) {$t_1$};
\node[above right] at (t2) {$t_2$};
\node[above left] at (t3) {$t_3$};
\end{scriptsize}
\end{tikzpicture}
\caption{The regular $r$-disk triangle $T_{w,r}$ with inradius $\varrho$ (with thick boundary), the regular $\overline{r}$-disk triangle $T_{w,\overline{r}}$ with $r<\overline{r}$ (with dashed boundary) and the $\overline{r}$-disk hexagon $Q_{w,\overline{r},\varrho}$ (with dotted boundary) inscribed in $T_{w,r}$}
\label{fig:trianglesareadecreases}
\end{figure}

Let $\overline{r}>r$ (also $\overline{r}<\frac{\pi}{2}$ if $\M^2=\sph^2$). Let $\varrho$ be the inradius of $T_{w,r}$ and $\overline{\varrho}$ be the inradius of $T_{w,\overline{r}}$. The vertices of $T_{w,r}$ are $v_1,v_2,v_3$ and the vertices of $T_{w,\overline{r}}$ are $q_1,q_2,q_3$. The midpoint of the arc of $T_{w,r}$ opposite to $v_1,v_2,v_3$ are $m_1,m_2,m_3$, respectively and the midpoint of the arc of $T_{w,\overline{r}}$ opposite to $q_1,q_2,q_3$ are $t_1,t_2,t_3$, respectively. Let us position $T_{w,r}$ and $T_{w,\overline{r}}$ such that $p$ is the incenter of both $T_{w,r}$ and $T_{w,\overline{r}}$ and $q_i$ is on the half-line emanating from $p$ containing $v_i$ for all $i\in\{1,2,3\}$; see Figure~\ref{fig:trianglesareadecreases}. From Corollary~\ref{cor:rho0:monotone}, we know that $\varrho>\overline{\varrho}$. We observe that $d(t_i,m_i)=\varrho-\overline{\varrho}$ and $d(v_i,q_i)=(w-\varrho)-(w-\overline{\varrho})=\varrho-\overline{\varrho}$ for all $i\in\{1,2,3\}$. Then, since $r<\overline{r}$, the $\overline{r}$-disk hexagon $Q_{w,\overline{r},\varrho}=[q_1,t_3,q_2,t_1,q_3,t_2]_r$ is properly contained in $T_{w,r}=[q_1,t_3,q_2,t_1,q_3,t_2]_{\overline{r}}$, but has a greater area than $T_{w,\overline{r}}$ by Lemma~\ref{lem:areaofhexagonsincreases}. Hence, $\area(T_{w,r})>\area(T_{w,\overline{r}})$.

%---

%Let $T_{w,r}$ and $T_{w,r_0}$ be regular $r$-ball triangles with radius $0<w \leq r_0 < r$.  We can construct both these triangles centered at a point $p$ in a similar way as described in lemma \ref{lem:areaofhexagonsincreases} such that $T_{w,r}=Q_{w,r,\varrho}$ and $T_{w,r_0}=Q_{w,r_0,\varrho_0}$, where $\varrho$ is the inradius of $T_{w,r}$ and $\varrho_0$ is the inradius of $T_{w,r_0}$.  Let $q_1, q_2, q_3, t_1, t_2,$ and $t_3$ be the corresponding points of $Q_{w,r_0,\varrho_0}$ we have that $d(p,q_i)=w-\varrho_0$ and $d(p,t_i)=\varrho_0$.  Consider the set $Q_{w,r,\varrho_0}$.  Clearly, we have points $q_1'$, $q_2'$, and $q_3'$ such that $d(p, q_i')=w-\varrho_0$ and $\angle\left(q_i,p,q_j\right)=\frac{2\pi}{3}$ so $q_i' = q_i$ for all $i=\{1,2,3\}$, and similarly, since $d(t_i,p)=\varrho_0=d(t_i',p)$ and $t_i'$ are collinear with $q_i$ and $p$, $t_i'=t_i$ for all $i=\{1,2,3\}$ as well.  From the symmetry of these objects we can limit our explorations to that of a single arc, without loss of generality, we examine the arcs from $q_1$ to $t_2$.  Since $r_0 < r$ the area bounded by $[p,t_2]$, $[p,q_1]$ and the $r_0$-arc from $q_1$ to $t_2$ contains the area bounded by $[p,t_2]$, $[p,q_1]$ and the $r$-arc from $q_1$ to $t_2$. So, the area of $T_{w,r_0}$ is greater than that of $Q_{w,r,\varrho_0}$, from the lemma \ref{lem:areaofhexagonsincreases} and the fact that $\varrho < \varrho_0$, the area of $T_{w,r}$ is less than that of $Q_{w,r,\varrho_0}$, thus the area of $T_{w,r}$ decreases in $r$.
\end{proof}

Now we are ready to prove Theorem \ref{thm:isominwidth:spindleconvex}.

\begin{proof}[Proof of Theorem \ref{thm:isominwidth:spindleconvex}]
Let $K\subset\M^2$ be an $r$-ball convex body of minimal width $w$ with $0<w\leq r$, and if $\M^2=\sph^2$ then we also assume $r<\frac{\pi}{2}$. Let $\varrho=\varrho(K)$. Then by Theorem \ref{thm:Blaschke}, $\varrho_0\leq\varrho\leq\frac{w}{2}$ where $\varrho_0$ is the inradius of $T_{w,r}$. If $\varrho=\frac{w}{2}$, then by containment, $\area(K)\geq\area(B)$ where $B$ is the incircle of $K$. We know from the isodiametric inequality and the Blaschke--Lebesgue inequality that $\area(B)>\area(T_{w,w})$. Finally, we have $\area(T_{w,w})\geq\area(T_{w,r})$ from Lemma \ref{lem:areaoftrianglesdecreasesinr}.

If $\varrho<\frac{w}{2}$, then there is a cap-domain $C$ contained in $K$ where $C$ is the $r$-ball convex hull of the incircle $B=B(p,\varrho)$ and three points $q_1,q_2,q_3$ at distance $w-\varrho$ from $p$ such that the caps of $C$ are pairwise non-overlapping (cf. Lemma \ref{lem:nonoverlapping}). Since these caps are non-overlapping, $\area(C)=\area(\widetilde{C})$ where $\widetilde{C}$ is the $r$-ball convex hull of $B$ and the points $\widetilde{q}_1,\widetilde{q}_2,\widetilde{q}_3$ such that $d(p,\widetilde{q}_i)=w-\varrho$ and that $\angle(\widetilde{q}_j,p,\widetilde{q}_k)=\frac{2\pi}{3}$ for all $\{i,j,k\}=\{1,2,3\}$. Let $Q$ be the $r$-ball convex hull of $q_1,q_2,q_3,t_1,t_2,t_3$ where $t_i$ is the intersection point of the line through $p$ and $q_i$ and $\widetilde{C}$ different from $q_i$. But then
$$
\area(K)\geq\area(C)=\area(\widetilde{C})\geq\area(Q)\geq\area(T_{w,r})
$$
with equality if and only if $K$ and $T_{w,r}$ are congruent by Lemma \ref{lem:areaofhexagonsincreases}. This concludes the proof.
\end{proof}

\section{Acknowledgements}
The authors thank Professor K\'aroly Bezdek for helpful ideas and suggestions.

The research of Ferenc Fodor and \'Ad\'am Sagmeister was supported by NKFIH project no.~150151. Project no.~150151 has been implemented with the support provided by the Ministry of Culture and Innovation of Hungary from the National Research, Development and Innovation Fund, financed under the ADVANCED\_24 funding scheme.

This research was also supported by project TKP2021-NVA-09. Project no. TKP2021-NVA-09 has been implemented with the support provided by the Ministry of Innovation and Technology of Hungary from the National Research, Development and Innovation Fund, financed under the TKP2021-NVA funding scheme.

\bibliography{bibliography}
\bibliographystyle{abbrv}

\end{document}